\begin{document}
\newcommand{\rco}{\color{red}}
\newtheorem{thm}{Theorem}[section]
\newtheorem{dei}{Definition}
\newtheorem{lem}{Lemma}
\newtheorem{exa}{Example}
\newtheorem{cor}{Corollary}
\newtheorem{pro}{Proposition}
\newtheorem{ass}{Assumption}
\theoremstyle{plain}
\newtheorem{rmk}{Remark}
\newcommand{\sectemul}{\arabic{section}}
\renewcommand{\theequation}{\sectemul.\arabic{equation}}
\renewcommand{\thepro}{\sectemul.\arabic{pro}}
\renewcommand{\thelem}{\sectemul.\arabic{lem}}
\renewcommand{\thethm}{\sectemul.\arabic{thm}}
\renewcommand{\thecor}{\sectemul.\arabic{cor}}
\renewcommand{\theexa}{\sectemul.\arabic{exa}}
\renewcommand{\thedei}{\sectemul.\arabic{dei}}
\renewcommand{\theass}{\Alph{ass}}
\renewcommand{\thermk}{\sectemul.\arabic{rmk}}
\renewcommand{\thesubsection}{\thesection.\arabic{subsection}}
\newcommand{\nn}{\nonumber}
\newcommand{\ol}{\overline}
\newcommand{\be}{\begin{equation}}
\newcommand{\ee}{\end{equation}}
\newcommand{\ben}{\begin{equation*}}
\newcommand{\een}{\end{equation*}}
\newcommand{\ba}{\begin{array}}
\newcommand{\ea}{\end{array}}
\newcommand{\beqn}{\begin{eqnarray*}}
\newcommand{\eeqn}{\end{eqnarray*}}
\newcommand{\beq}{\begin{eqnarray}}
\newcommand{\eeq}{\end{eqnarray}}
\newcommand{\bl}{\begin{Lemma}}
\newcommand{\el}{\end{Lemma}}
\newcommand{\bt}{\begin{Theorem}}
\newcommand{\et}{\end{Theorem}}
\newcommand{\p}{{\heiti proof}\quad}
\newcommand{\ep}{eproof.}


\title{ Precise local large deviations for random sums with applications to risk models%
\thanks {This work was supported by National Natural Science Foundation of China (No. 11401415) }}
\author{\small Qiuying Zhang, Fengyang Cheng\thanks{Corresponding author. E-mail:
 chengfy@suda.edu.cn}\\
\small Department of Mathematics, Soochow University, Suzhou, 215006, China\\
}
\date{}
\maketitle
{\bf ABSTRACT. }
In this paper, we investigate the precise local large deviation probabilities for random sums of independent real-valued random variables with a common distribution $F$, where $F(x+\Delta)=F((x, x+T])$ is an $\mathcal{O}$-regularly varying function for some fixed constant $T>0$(finite or infinite). We also obtain some results on precise local large deviation probabilities for the claim surplus process of generalized risk models in which the premium income until time $t$ is simply assumed to be a nondecreasing and nonnegative stochastic process. In particular, the results we obtained are also valid for the global case, i.e. case $T=\infty$.

\noindent {\small{\bf Keywords:} random sums; precise local large deviations; risk models; $O$-regularly varying function;  intermediate regularly varying function.}

\noindent {\small{\bf 2010 Mathematics Subject Classification: }60E05;62E20}
\section{Introduction}\setcounter{equation}{0}\setcounter{dei}{0}\setcounter{exa}{0}
\setcounter{thm}{0}\setcounter{cor}{0}\setcounter{lem}{0}\setcounter{pro}{0}\setcounter{rmk}{0}
\mbox{}

Throughout this paper, let $T$ be a positive constant or $\infty$, and  denote $\Delta=\Delta(T)=(0,T]$ if $T<\infty$ and $\Delta=\Delta(T)=(0,\infty)$ if $T=\infty$. In addition, for any real $x$, we write $x+\Delta=\{z\in \mathbb{R}: z=x+y,y\in\Delta\}$.

Let $\{X,X_k:k\geq1\}$ be a sequence of independent and identically distributed (i.i.d) random variables (r.v.s) with a common distribution $F$ and a finite mean $\mu=EX $, and let $\{ N(t),t \geq 0\}$ be a counting process with a finite mean function $\lambda (t)=EN(t)$ which tends to $\infty$ as $t \to \infty $. Furthermore, suppose that $\{X,X_k:k\geq1\}$ and  $\{ N(t),t \geq 0\}$ are independent.

In this paper, we will investigate the precise local large deviation probabilities  of random sums
\ben S_{N(t)}=\sum\limits_{k=1}^{N(t)}X_k,~t\geq 0,~\sum_{k=1}^0X_k=0,\een
which states that under some suitable conditions, for every fixed $\gamma>0$, the relation
{\small\begin{equation}\label{eq100}
C_1\leq\lim_{t\to\infty}\inf_{x\geq \gamma \lambda(t)}\frac{P(S_{N(t)}-\mu \lambda (t)\in x+\Delta)}{\lambda(t)F(x+\Delta)}\leq\lim_{t\to\infty}\sup_{x\geq \gamma \lambda(t)}\frac{P(S_{N(t)}-\mu \lambda (t)\in x+\Delta)}{\lambda(t)F(x+\Delta)}\leq C_2
\end{equation}}
holds, where $C_1$ and $C_2$ are two indices of the function $F(x+\Delta)$.


When $T=\infty$(it is called the global case) and $C_1=C_2=1$, relation (\ref{eq100}) has been investigated by many researchers, see 
  Kl\"{u}ppelberg and Mikosch (1997), 
   Embrechts et al. (1997),  Mikosch and Nagaev (1998) and Tang et al. (2001), among many others.
 Recent advances on precise global large deviation probabilities for random sums and risk models can be found in Chen and Zhang (2007), Chen et al. (2011), Chen et al. (2014), Ng et al. (2003), Ng et al. (2004),  Wang and Wang (2013) and references therein.


However, due to the local case (i.e. $T<\infty$) is difficult to handle, results for the precise local large deviation probabilities  of random sums are rare.
But the local case 
 is also very useful in practical applications.
 For example, consider the sales status of a commodity in a large supermarket: Let $X_k$ denote the amount of the commodity by the $k-$th consumer,  
 $k=1,2,\cdots$ and let $N(t)$ denote the total number of the customers 
  until time $t$, $t\geq 0$. Then, the random sums $S_{N(t)}=\sum_{k=1}^{N(t)}X_k$ represent total sales of the commodity in the supermarket until time $t$. In practical applications,  the supplier usually give appropriate discount to supermarkets based on their sales volume range. Therefore, for some positive numbers $x$ and $T$, a good estimation of $P(S_{N(t)}\in x+\Delta)$ is obviously helpful for the supermarket developing pricing strategy.


The second goal of this paper is to investigate the precise local large deviation for the claim surplus process of a generalized risk model, in which the premium income until time $t$ is simply assumed to be a nondecreasing and nonnegative stochastic process. The model can be described as follows:
 \begin{itemize} \setlength{\itemsep}{-18pt}
 \item The claim number until time $t$ is a counting process $N(t)$ with a mean function $EN(t) = \lambda (t)$ which tends to $\infty$ as $t \to \infty $; \\
\item  The premium income until time $t$ is a nondecreasing and nonnegative stochastic process $\{ Y(t),t \geq 0\}$ with a mean function $EY(t) =  b (t)$  which tends to $\infty$ as $t \to \infty $ also;\\
\item The individual claim sizes $\{ X_k,k \geq 1\} $ are i.i.d nonnegative r.v.s with a common distribution $F$ and a finite mean $\mu $;\\
\item  In addition, assume that $\{Y(t),t\geq 0\}$, $\{N(t),t\geq 0\}$ and $\{X_k,k\geq 1\}$ are mutually independent.
\end{itemize}
Suppose that $x>0$ is the initial reserve of a insurance company, then the risk reserve process is given by
$ R(t)=x+Y(t)-\sum\limits_{i = 1}^{N(t)} {{X_i}}$
and the claim surplus process is denoted by
$S(t) = \sum\limits_{i = 1}^{N(t)} {{X_i}}  - Y(t).$
We will prove that, under some suitable conditions, for every fixed $\gamma>\nu$, where $\nu$ is a positive constant, the relation
\be\label{eq1.2}
C_1\leq\lim_{t\to\infty}\inf_{x\geq \gamma \lambda(t)}\frac{P(S(t)-ES(t)\in x+\Delta)}{\lambda(t)F(x+\Delta)}\leq\lim_{t\to\infty}\sup_{x\geq \gamma \lambda(t)}\frac{P(S(t)-ES(t)\in x+\Delta)}{\lambda(t)F(x+\Delta)}\leq C_2
\end{equation}
holds for some positive constants $C_1$ and $C_2$.

 We particularly point out that when $T=\infty$ and $C_1=C_2=1$, (\ref{eq1.2}) reduces to
\begin{equation*}\label{eq104}
   \lim_{t\to\infty}\sup_{x\geq \gamma \lambda(t)}\left|\frac{P(S(t)-ES(t)> x )}{\lambda(t)\overline{F}(x )}-1\right|=0,
\end{equation*}
which was investigated by many researchers such as Tang et al. (2001), Hu (2004), Chen et al. (2011), Chen et al. (2014), Ng et al. (2004), and references therein.

The rest of this paper consists of three sections. Section 2 gives some notations and introduces several function classes. Section 3 presents the main results. Proofs of theorems and corollaries are arranged in Section 4.

\section{Preliminaries}
\setcounter{equation}{0}\setcounter{dei}{0}\setcounter{exa}{0}
\setcounter{thm}{0}\setcounter{cor}{0}\setcounter{lem}{0}\setcounter{pro}{0}\setcounter{rmk}{0}\

First, we introduce some notations and notions which will be valid in the rest of this paper.
Let $a(x)$ and $b(x)$ be two nonnegative unary functions. We write $a(x) \sim b(x)$ if $\mathop {\lim }\limits_{x \to \infty } \frac{{a(x)}}{{b(x)}} = 1$, we write $a(x) =o(b(x))$ if $\lim\limits_{x \to \infty } \frac{a(x)}{b(x)}=0$ and we write $a(x) =O( b(x))$ if $\limsup\limits_{x \to \infty } \frac{a(x)}{b(x)}<\infty$.

Let $a(t,x),~ b(t,x)$ be two nonnegative binary functions and let ${{\cal D}_t}\neq \emptyset $ be some $x$-region. We say that
$a(t,x) \lesssim b(t,x)$ (or equivalently, $b(t,x) \gtrsim a(t,x)$)
 holds uniformly for all $x \in {\cal D}_t $ as $t \to \infty $ if
  $\mathop {\lim }\limits_{t \to \infty } \mathop {\sup }\limits_{x \in {{\cal D}_t}} \frac{{a(t,x)}}{{b(t,x)}} \leq 1$,
and we say that $a(x,t)\sim b(x,t)$  holds uniformly for all $x \in {\cal D}_t $ if both 
$a(t,x) \lesssim b(t,x) \textup{~ and~} b(t,x) \lesssim a(t,x)$
 hold uniformly for all $x \in {\cal D}_t $ as $t \to \infty $.
 Furthermore, we say that $a(x,t)=o(b(x,t))$  holds uniformly for all $x \in {\cal D}_t $ if 
  $\mathop {\lim }\limits_{t \to \infty } \mathop {\sup }\limits_{x \in {{\cal D}_t}} \frac{{a(t,x)}}{{b(t,x)}} =0.$

Next, we introduce some function classes which will be used in this paper.
\begin{dei}\label{dei>ir}
Let $f$ be an eventually positive function, i.e. $f(x)>0$ for all sufficiently large $x$. $f$ is called intermediate regularly varying, denoted by $f \in \mathcal{IR}$,  if
$\mathop {\lim }\limits_{y \downarrow 1} \mathop {\liminf}\limits_{x \to \infty } \frac{{f(xy)}}{{f(x)}} = \mathop {\lim }\limits_{y \downarrow 1} \mathop {\limsup }\limits_{x \to \infty } \frac{{f(xy)}}{{f(x)}} = 1;$
$f$ is called $\mathcal{O}$-regularly varying, denoted by $f \in \mathcal{OR}$,  if
$0 < \mathop {\liminf}\limits_{x \to \infty } \frac{{f(xy)}}{{f(x)}} \leq \mathop {\limsup }\limits_{x \to \infty } \frac{{f(xy)}}{{f(x)}} < \infty$
holds for each fixed $y \geq 1$;
$f$ is said to be long tailed, denoted by $f \in \mathcal{L}$, if 
 $\lim\limits_{x\to\infty}\frac{f(x+y)}{f(x)}=1$
for every fixed $y\in (-\infty,\infty)$.
\end{dei}

By Corollary 1.2 in Cline (1994), 
 ~it is well known that $\mathcal{IR} \subset \mathcal{L}\cap\mathcal{OR}$. Furthermore, we remark that, if $\overline{F}(x):=F(x,\infty)\in \mathcal{IR}$, then $F$ is called to belong to distribution class $\mathcal{C}$, and if $\overline{F}\in \mathcal{OR}$, then $F$ is called to belong to distribution class $\mathcal{D}$.

For an eventually positive function $f$, we introduce some function indices as follows:
\begin{align*}
\alpha (f) &= \lim _{y \to \infty } \frac{\log (\limsup\limits_{x \to \infty } \frac{f(xy)}{f(x)})}{\log y} ;\\
\beta (f)  &= \lim_{y \to \infty } \frac{\log (\liminf\limits_{x \to \infty } \frac{f(xy)}{f(x)})}{\log y};\\
 l_f&=\lim\limits_{\epsilon \downarrow0}\liminf\limits_{x\to\infty}\frac{\inf\limits_{(1-\epsilon )x\leq z\leq(1+\epsilon )x}f(z)}
  {f(x)};\\
  L_f&=\lim\limits_{\epsilon \downarrow0}\limsup\limits_{x\to\infty}\frac{\sup\limits_{(1-\epsilon )x\leq z\leq(1+\epsilon )x}f(z)}
  {f(x)}.
\end{align*}
We call $\alpha (f)$ and $\beta (f)$ the upper and lower Matuszewska's indices of $f$, respectively. 
By Corollary 2.2I in Cline (1994), it is obviously that $f \in \mathcal{IR}$ iff $l_f =L_f.$

The following proposition can be found in Bingham et al. (1987):
\begin{pro}\label{pro2.2}
Suppose that $f$ is an eventually positive function.

\textup{(i)} 
  $f \in \mathcal{OR}$ if and only if both $\alpha (f)$ and $\beta (f)$ are finite.

\textup{(ii)} 
 If $f \in \mathcal{OR}$, then for every $\alpha  > \alpha (f)$, there exist positive numbers ${c_\alpha }$ and ${x_\alpha }$ such that
\begin{equation}\label{eq2.4}
  \frac{{f(xy)}}{{f(x)}} \leq {c_\alpha }{y^\alpha }
\end{equation}
 holds for each $y\geq 1$ and all $x\geq{x_\alpha }$. Similarly, for every $\beta  < \beta (f)$, there exist positive numbers ${c_\beta }$ and ${x_\beta }$ such that
 \begin{equation}\label{eq2.5}
  \frac{{f(xy)}}{{f(x)}} \geq {c_\beta }{y^\beta }
 \end{equation}
 holds for each $y\geq 1$ and all $x\geq{x_\beta }$.\\
\end{pro}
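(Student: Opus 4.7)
The plan is to reduce both parts of the proposition to classical results on submultiplicative functions and the Fekete lemma. I will introduce
\[ g(y) := \limsup_{x\to\infty} \frac{f(xy)}{f(x)}, \qquad h(y) := \liminf_{x\to\infty} \frac{f(xy)}{f(x)}, \qquad y \geq 1, \]
and observe, by splitting
\[ \frac{f(x y_1 y_2)}{f(x)} = \frac{f((x y_1) y_2)}{f(x y_1)} \cdot \frac{f(x y_1)}{f(x)} \]
and passing to $\limsup$ (respectively $\liminf$) in $x$, that $g$ is submultiplicative and $h$ is supermultiplicative on $[1,\infty)$. After the change of variables $u = \log y$, the functions $\tilde g(u) := \log g(e^u)$ and $\tilde h(u) := -\log h(e^u)$ become subadditive on $[0,\infty)$.

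For part (i), the forward direction assumes $f \in \mathcal{OR}$, so that $0 < h(y) \leq g(y) < \infty$ for every $y \geq 1$; thus $\tilde g$ and $\tilde h$ are finite-valued and, by submultiplicativity applied from any point of finiteness, locally bounded on $[0,\infty)$. Fekete's lemma for subadditive functions then delivers
\[ \alpha(f) = \lim_{u\to\infty} \frac{\tilde g(u)}{u} = \inf_{u>0} \frac{\tilde g(u)}{u}, \qquad -\beta(f) = \lim_{u\to\infty} \frac{\tilde h(u)}{u} = \inf_{u>0} \frac{\tilde h(u)}{u}; \]
the two-sided inequality $h(y) \leq g(y)$ together with the strict positivity of both quantities built into $\mathcal{OR}$ rules out the value $-\infty$ in either limit, so both indices are finite. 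Conversely, if $\alpha(f)$ and $\beta(f)$ are finite, the definitions supply $y_0 > 1$ with $g(y_0) < \infty$ and $h(y_0) > 0$; iterating $g(y_0^n) \leq g(y_0)^n$ and $h(y_0^n) \geq h(y_0)^n$ and interpolating via submultiplicativity extends these bounds to every $y \geq 1$, giving $f \in \mathcal{OR}$.

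For part (ii), I will fix $\alpha > \alpha(f)$ and first extract from the definition of $\alpha(f)$ as a limit a threshold $y_0 \geq 1$ such that $g(y) \leq y^\alpha$ for all $y \geq y_0$; this translates to the pointwise asymptotic bound $f(xy)/f(x) \leq (1+o(1)) y^\alpha$ as $x\to\infty$ for each fixed $y \geq y_0$. The decisive step is then to upgrade this to a bound that is uniform in $y \geq 1$ and valid for all $x$ beyond a \emph{single} threshold $x_\alpha$; the contribution of the compact interval $[1, y_0]$ is absorbed into the constant $c_\alpha$ using the finiteness of $g$ there. The companion estimate (\ref{eq2.5}) will follow by the symmetric argument applied to $h$ and $\beta(f)$.

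The main obstacle is precisely this uniformity of the Potter-type bound in part (ii). The pointwise version is essentially a restatement of the definition of the upper Matuszewska index, but extracting a single $x_\alpha$ that works simultaneously for \emph{all} $y \geq 1$ requires measurability of $f$ together with the Baire-category (or equivalent) argument used in the classical proof of Potter's theorem for regularly varying functions (see Bingham, Goldie and Teugels (1987), Section 2.2). The remaining ingredients---submultiplicativity, Fekete's lemma, and the propagation from $y_0$ to all of $[1,\infty)$---are all elementary.
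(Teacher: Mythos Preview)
The paper does not give its own proof of this proposition; it simply records the statement and cites Bingham, Goldie and Teugels (1987). Your sketch is precisely the classical argument that one finds there: pass to $g(y)=\limsup_{x\to\infty} f(xy)/f(x)$ and $h(y)=\liminf_{x\to\infty} f(xy)/f(x)$, exploit submultiplicativity/supermultiplicativity, take logarithms and apply Fekete's lemma for the indices, and invoke the uniform (Potter-type) bounds from BGT Section~2.2 for part~(ii). So in substance you are reproducing the reference, not offering an alternative route.

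One genuine gap to flag. In the converse of part~(i) you write that from $g(y_0)<\infty$ one can ``interpolate via submultiplicativity'' to get $g(y)<\infty$ for every $y\ge 1$. Submultiplicativity alone does \emph{not} give this: from $g(y_0)<\infty$ you obtain $g(y_0^n)\le g(y_0)^n<\infty$, but for $y\in(1,y_0)$ the inequality $g(y_0)\le g(y)\,g(y_0/y)$ only yields a lower bound on $g(y)$, not an upper one. The passage from finiteness at a single point to finiteness (indeed local boundedness) on all of $[1,\infty)$ uses measurability of $f$ and a Steinhaus/Baire-category argument, exactly the machinery you correctly invoke in part~(ii). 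You should make that dependence explicit in part~(i) as well; as written, the converse is incomplete without it.
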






Finally, we will end this section by providing the definition of almost decreasing function, which was introduced by Aljan\v{c}i\'{c} and Aran{d}elovi\'{c} (1977).

\begin{dei} An eventually positive function $f $ is said to be almost decreasing if
$$\limsup\limits_{x\to\infty}\frac{\sup\limits_{u\geq x}f(u)}{f(x)}<\infty.$$
\end{dei}

\newpage

\section{Main results}
\setcounter{equation}{0}\setcounter{dei}{0}\setcounter{exa}{0}
\setcounter{thm}{0}\setcounter{cor}{0}\setcounter{lem}{0}\setcounter{pro}{0}\setcounter{rmk}{0}\mbox{}

In this section, we will present the main results of this paper. The proofs of theorems and corollaries are arranged in section 4.

The first theorem gives the precise local large deviation probabilities for random sums:
\begin{thm}\label{thm>thm31}
Let $\{ X,X_k,k \geq 1\} $ be a sequence of $i.i.d$ real valued $r.v.s$ with a common distribution $F$ of finite mean $\mu $. Let $\{ N(t),t \geq 0\} $ be a counting process with finite mean function $\lambda (t)=EN(t)$ which tends to $\infty$ as $t \to \infty $. Assume that $F_{\Delta}(x)=F(x + \Delta )$ is almost decreasing and $E(X^+)^r < \infty $ for some $r > 1$, where $X^+=XI(X\geq 0)$ and  $I(A)$ is the indicator function of the set $A$.

 Furthermore, suppose that one of the following two conditions holds:

\textup{(i)} $\mu \geq 0$ and the relation
\begin{equation}\label{T3101}
  EN^p(t)I(N(t) > (1 + \delta )\lambda (t)) = O(\lambda (t)),~t\to\infty
\end{equation}
holds for some $p > |\beta ({F_\Delta })|$ and all $\delta  > 0$;

\textup{(ii)} $\mu < 0$ and the relation
\begin{equation}\label{T3102}
  P(N(t) \leq (1 - \delta )\lambda (t)) =  o (\lambda (t)F(\lambda (t) + \Delta )),~~t\to\infty
\end{equation}
holds for all $0 < \delta  < 1$. \\
If $F(x + \Delta ) \in \mathcal{OR}$, then for every fixed $\gamma  > 0$, the relation
\begin{equation}\label{T3103}
  l_{F_\Delta}^2\lambda (t) F(x + \mu  + \Delta ) \lesssim P({S_{N(t)}} - \mu \lambda (t) \in x + \Delta ) \lesssim L_{F_\Delta}^2\lambda (t) F(x + \mu  + \Delta )
\end{equation}
holds uniformly for all $x \geq \gamma \lambda (t)$ as $t \to \infty $.
In particular, if $F(x + \Delta ) \in \mathcal{IR}$, then for every fixed $\gamma  > 0$, the relation 
\begin{equation}\label{eq101}
 P({S_{N(t)}} - \mu \lambda (t) \in x + \Delta ) \sim \lambda (t)F(x + \mu+\Delta )
\end{equation}
holds uniformly for all $x \geq \gamma \lambda (t)$ as $t \to \infty $.
\end{thm}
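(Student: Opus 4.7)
The plan is to condition on $N(t)$, reducing the problem to a local large deviation for deterministic sums:
\begin{equation*}
P(S_{N(t)}-\mu\lambda(t)\in x+\Delta)=\sum_{n=0}^{\infty}P(N(t)=n)\,P\bigl(S_n-n\mu\in(x+(\lambda(t)-n)\mu)+\Delta\bigr).
\end{equation*}
The key auxiliary result I would prove first is a local Nagaev-type estimate: under the hypotheses on $F$, for every $\gamma_1>0$,
\begin{equation*}
l_{F_\Delta}\,n\,F(y+\Delta)\lesssim P(S_n-n\mu\in y+\Delta)\lesssim L_{F_\Delta}\,n\,F(y+\Delta)
\end{equation*}
uniformly for $y\ge\gamma_1 n$ as $n\to\infty$. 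This would be established by the usual one-big-jump decomposition: the event $\{S_n-n\mu\in y+\Delta\}$ is, up to the local oscillation of $F_\Delta$, carried by configurations in which exactly one summand has size $\approx y$, yielding $nF(y+\Delta)$ with the $l_{F_\Delta}/L_{F_\Delta}$ factor coming from the small shift induced by the remaining $n-1$ centred summands; the no-big-jump error is killed by $E(X^+)^r<\infty$ together with the $\mathcal{OR}$ property.

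With this lemma I would fix a small $\delta>0$ and some $\gamma_1\in(0,\gamma)$, splitting the $n$-sum into $I_1$ ($n\le(1-\delta)\lambda(t)$), $I_2$ ($|n-\lambda(t)|\le\delta\lambda(t)$) and $I_3$ ($n>(1+\delta)\lambda(t)$). On $I_2$ the shift satisfies $|(\lambda(t)-n)\mu|\le\delta|\mu|\lambda(t)\le(\delta|\mu|/\gamma)x$, hence $x+(\lambda(t)-n)\mu$ lies in a $(1\pm o(1))x$-window and stays above $\gamma_1 n$; applying the single-sum lemma and then the very definition of $L_{F_\Delta}$ (resp.\ $l_{F_\Delta}$) to pass from $F(x+(\lambda(t)-n)\mu+\Delta)$ to $F(x+\mu+\Delta)$ yields
\begin{equation*}
l_{F_\Delta}^{2}\lambda(t)F(x+\mu+\Delta)\lesssim I_2\lesssim L_{F_\Delta}^{2}\lambda(t)F(x+\mu+\Delta)
\end{equation*}
as $\delta\downarrow0$. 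Thus $I_2$ is responsible for both sides of (\ref{T3103}), and it remains to show that $I_1,I_3=o(\lambda(t)F(x+\mu+\Delta))$ uniformly for $x\ge\gamma\lambda(t)$.

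This negligibility is the main obstacle, and it is where the dichotomy (i)/(ii) enters. In case (i) ($\mu\ge0$) I would subdivide $I_3$ at the threshold $n_1(x,t)=(x+\mu\lambda(t))/(\gamma_1+\mu)$: on $(1+\delta)\lambda(t)<n\le n_1$ the shifted argument still satisfies $x+(\lambda(t)-n)\mu\ge\gamma_1 n$, so the single-sum lemma applies and the two-sided $\mathcal{OR}$ estimate of Proposition \ref{pro2.2}(ii) allows one to bound $F(x+(\lambda(t)-n)\mu+\Delta)$ by a constant multiple of $F(x+\mu+\Delta)$, whence this piece is controlled by $C\,F(x+\mu+\Delta)\,E[N(t)I(N(t)>(1+\delta)\lambda(t))]$ which is $o(\lambda(t)F(x+\mu+\Delta))$ via (\ref{T3101}) and a Markov step; on $n>n_1$ one uses the crude bound $P(\cdot)\le1$ together with $P(N(t)>n_1)=O(\lambda(t)/n_1^{p})$ from (\ref{T3101}) and the $\mathcal{OR}$ lower bound $F_\Delta(x+\mu)\gtrsim (x+\mu)^{\beta}$ for some $\beta<\beta(F_\Delta)$, which together absorb the decay because $p>|\beta(F_\Delta)|$. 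The term $I_1$ in case (i) is handled by a Markov-type bound from $E(X^+)^r<\infty$, exploiting that for $n\ll\lambda(t)$ one has $x+(\lambda(t)-n)\mu\ge x\ge\gamma\lambda(t)$. Case (ii) ($\mu<0$) is symmetric: (\ref{T3102}) directly kills $I_1$---the single-sum lemma alone is insufficient there, since for small $n$ the shift $(\lambda(t)-n)\mu$ is positive and potentially comparable to $x$---while $I_3$ is dispatched by the single-sum lemma together with moment bounds derived from $E(X^+)^r<\infty$.

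Finally, (\ref{eq101}) is an immediate corollary of (\ref{T3103}), since $F_\Delta\in\mathcal{IR}$ is equivalent to $l_{F_\Delta}=L_{F_\Delta}=1$, as noted in the excerpt following Definition \ref{dei>ir}. The most delicate step will be the uniform $I_3$ estimate in case (i): one must interpolate carefully between the regime where the single-sum local LD is directly applicable and the regime where uniformity fails, balancing (\ref{T3101}) against the $\mathcal{OR}$ lower bound on $F_\Delta$ over the entire range $x\ge\gamma\lambda(t)$, with the choice $p>|\beta(F_\Delta)|$ being precisely what makes the accounting close.
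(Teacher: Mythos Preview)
Your overall strategy matches the paper's: condition on $N(t)$, split into three $n$-ranges, and use a deterministic local large-deviation result (the paper's Lemma~\ref{lem>lem402}) for the central range $I_2$. The treatment of $I_2$ is correct and essentially identical to the paper's. However, there are two genuine gaps in your handling of $I_1$.

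In case (i), your proposed ``Markov-type bound from $E(X^+)^r<\infty$'' for $I_1$ is insufficient. The single-sum lemma is an asymptotic valid only for $n\ge n_0$; for the finitely many $n<n_0$ you need a bound on $P(S_n\in u+\Delta)$ that holds for \emph{all} $n\ge1$ and still decays fast enough in $u$. A plain $r$-th-moment Markov inequality gives only $P(S_n>u)\le C_{n_0}u^{-r}$, whereas one needs $u^{-p}=o(F_\Delta(u))$ with $p>|\beta(F_\Delta)|$; under the hypotheses one typically has $|\beta(F_\Delta)|>r$ (e.g.\ if $F_\Delta(x)\asymp x^{-a}$ then $E(X^+)^r<\infty$ forces $r<a=|\beta(F_\Delta)|$), so $u^{-r}$ is too slow. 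The paper closes this with a separate Fuk--Nagaev-type estimate (Lemma~\ref{lem>lem401}): for every $v>0$ and all $n\ge1$, $P(S_n\in u+\Delta)\le c_1 nF(vu+\Delta)+c_2(n/u)^{1/v}$, which with $v=1/p$ delivers the required $u^{-p}$ decay. The same lemma lets the paper handle $J_3$ in case (i) in one stroke, without your threshold $n_1$: your split does work (the ratio $(x+\mu)/u_n$ stays in a fixed compact interval on $(1+\delta)\lambda(t)<n\le n_1$, so $\mathcal{OR}$ applies), but it is more laborious than necessary once Lemma~\ref{lem>lem401} is available.

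In case (ii), condition (\ref{T3102}) does not ``directly kill $I_1$'' uniformly over $x\ge\gamma\lambda(t)$: it yields only $I_1\le P(N(t)<(1-\delta)\lambda(t))=o(\lambda(t)F(\lambda(t)+\Delta))$, and for $x\gg\lambda(t)$ the ratio $F(\lambda(t)+\Delta)/F(x+\mu+\Delta)$ is unbounded. The paper therefore splits the $x$-range at $\tilde\gamma\lambda(t)$ with $\tilde\gamma=\max\{|\mu|+1,\gamma\}$: on $[\gamma\lambda(t),\tilde\gamma\lambda(t))$ condition (\ref{T3102}) combined with $F_\Delta\in\mathcal{OR}$ suffices (since $x$ and $\lambda(t)$ are comparable there), while on $[\tilde\gamma\lambda(t),\infty)$ the shifted argument satisfies $x+\mu\lambda(t)-n\mu\ge x+\mu\lambda(t)\ge(\tilde\gamma-|\mu|)\lambda(t)>0$ and is bounded below by a positive multiple of $n$, so the single-sum lemma (and Lemma~\ref{lem>lem401} for $n<n_0$) applies as in case (i). Incidentally, with $\mu<0$ and $n<\lambda(t)$ the shift $(\lambda(t)-n)\mu$ is \emph{negative}, not positive; this is precisely why the shifted argument can fall below zero when $\gamma<|\mu|$.
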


From Theorem \ref{thm>thm31}, we can easily obtain the following results.
\begin{cor}\label{cor>cor31}
Let $\{ X,X_k,k \geq 1\} $ be a sequence of $i.i.d$ real valued $r.v.s$ with a common distribution $F$ of finite mean $\mu $, and let $\{ N(t),t \geq 0\} $ be a counting process with finite mean function $\lambda (t)=EN(t)$ which tends to $\infty$ as $t \to \infty $. Suppose that $F_{\Delta}(x)=F(x + \Delta )$ is almost decreasing and $E{(X^+ )^r} < \infty $ for some $r > 1$.  For a real number c, assume that one of the following two conditions holds for a real number $c$:

\textup{(i)}  $\mu +c \geq 0$ and the relation (\ref{T3101}) holds for some $p > |\beta ({F_\Delta })|$ and all $\delta  > 0$;

\textup{(ii)} $\mu +c < 0$ and the relation (\ref{T3102}) holds for all $0 < \delta  < 1$.\\
If $F(x + \Delta ) \in \mathcal{L}\cap\mathcal{OR}$, then for every fixed $\gamma  > c$, the relation
\beq 
  l_{F_\Delta}^2\lambda (t) F(x - c\lambda (t) + \mu  + \Delta )&\lesssim& P\left(\sum\limits_{k = 1}^{N(t)} {({X_k} + c)}  - \mu \lambda (t) \in x + \Delta \right)\nonumber\\
   &\lesssim& L_{F_\Delta}^2\lambda (t) F(x - c\lambda (t) + \mu  + \Delta )\label{C3101}
\eeq
holds uniformly for all $x \geq \gamma \lambda (t)$ as $t \to \infty $.
In particular, if $F(x + \Delta ) \in \mathcal{IR}$, then for 
any fixed $\gamma  > c$, the relation
\begin{equation}\label{C3102}
P \left(\sum\limits_{k = 1}^{N(t)} {({X_k} + c)}  - \mu \lambda (t) \in x + \Delta \right) \sim \lambda (t)F(x - c\lambda (t)+\mu + \Delta )
\end{equation}
holds uniformly for all $x \geq \gamma \lambda (t)$ as $t \to \infty $.
\end{cor}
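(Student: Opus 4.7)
The plan is to reduce Corollary \ref{cor>cor31} to Theorem \ref{thm>thm31} by a change of variables. Set $Y_k=X_k+c$, so that $\{Y,Y_k:k\ge 1\}$ are i.i.d.\ real-valued r.v.s with common distribution $G(\cdot)=F(\cdot-c)$ and finite mean $\mu_Y=EY=\mu+c$. Writing $S_{N(t)}^Y=\sum_{k=1}^{N(t)}Y_k=\sum_{k=1}^{N(t)}(X_k+c)$, the identity
\ben
S_{N(t)}^Y-\mu\lambda(t)=\bigl(S_{N(t)}^Y-\mu_Y\lambda(t)\bigr)+c\lambda(t)
\een
allows me to translate the target probability:
\ben
P\!\left(S_{N(t)}^Y-\mu\lambda(t)\in x+\Delta\right)=P\!\left(S_{N(t)}^Y-\mu_Y\lambda(t)\in (x-c\lambda(t))+\Delta\right).
\een
It then suffices to apply Theorem \ref{thm>thm31} to the sequence $\{Y_k\}$, evaluated at the shifted argument $x'=x-c\lambda(t)$, which exceeds $(\gamma-c)\lambda(t)>0$ when $x\ge\gamma\lambda(t)$.

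The main technical step is to transfer the hypotheses of Theorem \ref{thm>thm31} from $F$ to $G$. Since $G(x+\Delta)=F(x-c+\Delta)$, the almost-decreasing property of $F_\Delta$ transfers immediately, and $E(Y^+)^r<\infty$ follows from $E(X^+)^r<\infty$ via the bound $(X+c)^+\le X^++|c|$. The crucial step is to show $G_\Delta\in\mathcal{OR}$ with the same local indices: because the corollary assumes $F_\Delta\in\mathcal{L}\cap\mathcal{OR}$, the long-tailed property gives $G_\Delta(x)=F_\Delta(x-c)\sim F_\Delta(x)$ as $x\to\infty$, whence $G_\Delta\in\mathcal{OR}$, $\beta(G_\Delta)=\beta(F_\Delta)$, $l_{G_\Delta}=l_{F_\Delta}$, and $L_{G_\Delta}=L_{F_\Delta}$. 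Conditions (i) and (ii) of Corollary \ref{cor>cor31} are phrased precisely to match conditions (i) and (ii) of Theorem \ref{thm>thm31} for the new mean $\mu_Y=\mu+c$ and index $|\beta(G_\Delta)|=|\beta(F_\Delta)|$.

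Applying Theorem \ref{thm>thm31} to $\{Y_k\}$ with the shift $x'=x-c\lambda(t)$ now yields, uniformly for $x\ge\gamma\lambda(t)$,
\ben
l_{F_\Delta}^2\,\lambda(t)\,G(x-c\lambda(t)+\mu_Y+\Delta)\lesssim P\!\left(S_{N(t)}^Y-\mu\lambda(t)\in x+\Delta\right)\lesssim L_{F_\Delta}^2\,\lambda(t)\,G(x-c\lambda(t)+\mu_Y+\Delta).
\een
Finally, unfolding $G(x-c\lambda(t)+\mu_Y+\Delta)=F(x-c\lambda(t)+\mu_Y-c+\Delta)=F(x-c\lambda(t)+\mu+\Delta)$ gives exactly (\ref{C3101}). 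The $\mathcal{IR}$ refinement (\ref{C3102}) follows verbatim from the second half of Theorem \ref{thm>thm31}, since $G_\Delta\sim F_\Delta\in\mathcal{IR}$ forces $G_\Delta\in\mathcal{IR}$ as well (equivalently, $l_{G_\Delta}=L_{G_\Delta}=1$).

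The only non-routine obstacle is to make sure the index-preservation step $\beta(G_\Delta)=\beta(F_\Delta)$ is justified cleanly; this is where the strengthened hypothesis $F_\Delta\in\mathcal{L}\cap\mathcal{OR}$ (rather than merely $\mathcal{OR}$ as in Theorem \ref{thm>thm31}) is essential, because without long-tailedness the shift by $-c$ could in principle alter the ratio $G_\Delta(xy)/G_\Delta(x)$ in the limit. Everything else amounts to book-keeping under the substitution $Y_k=X_k+c$.
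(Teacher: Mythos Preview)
Your proposal is correct and follows essentially the same approach as the paper: define the shifted variables $\hat X_k=X_k+c$ with distribution $\hat F(\cdot)=F(\cdot-c)$, use $F_\Delta\in\mathcal{L}\cap\mathcal{OR}$ to conclude $\hat F_\Delta\in\mathcal{OR}$ with $l_{\hat F_\Delta}=l_{F_\Delta}$ and $L_{\hat F_\Delta}=L_{F_\Delta}$, and then apply Theorem~\ref{thm>thm31} at the translated point $\hat x=x-c\lambda(t)\ge(\gamma-c)\lambda(t)$. Your write-up is in fact more careful than the paper's, explicitly checking the transfer of the almost-decreasing property, the moment condition $E(Y^+)^r<\infty$, and the equality $\beta(G_\Delta)=\beta(F_\Delta)$ needed for condition~(i).
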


 The second theorem gives precise local large deviation probabilities for  the claim surplus process of the generalized risk model introduced in section 1.
\begin{thm}\label{thm>thm32} 
In the general risk model introduced in section 1, suppose that $E{(X^ + )^r} < \infty $ for some $r > 1$ and $F_{\Delta}(x)=F(x + \Delta )$ is almost decreasing. In addition, assume that there exists a positive number $\nu$ such that
 \be\label{T3201}
  \limsup\limits_{t\to\infty}\frac{b (t)}{\lambda (t)}=\nu<\infty.\ee Assume that
\begin{equation}\label{T3202}
 \frac{Y(t)}{  b (t)} \mathop  \to \limits^P 1, ~t\to\infty.
\end{equation} and (\ref{T3101}) holds for some $p > |\beta ({F_\Delta })|$ and all $\delta  > 0$ as $t \to \infty $.
If $F(x + \Delta ) \in \mathcal{OR}$,
then for any fixed $\gamma >\nu$, the relation
\be \label{T3203}l_{F_\Delta}^3\lambda (t) F(x + \mu  + \Delta ) \lesssim P(S(t) - ES(t) \in x + \Delta ) \lesssim L_{F_\Delta}^3\lambda (t) F(x + \mu  + \Delta )\ee
holds for all $x \geq \gamma \lambda (t)$ as $t \to \infty $. In particular, if $F(x + \Delta ) \in \mathcal{IR}$, then for every fixed $\gamma  > \nu$, the relation
\begin{equation}\label{T3204}
 P(S(t) - ES(t) \in x + \Delta ) \sim \lambda (t)F(x +\mu+ \Delta )
\end{equation}
holds uniformly for all $x \geq \gamma \lambda (t)$ as $t \to \infty $.

\end{thm}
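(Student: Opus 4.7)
The plan is to reduce Theorem~3.2 to Theorem~3.1 by decomposing the claim surplus as
\ben
S(t)-ES(t)=W(t)-Z(t),\qquad W(t):=\sum_{k=1}^{N(t)}X_k-\mu\lambda(t),\qquad Z(t):=Y(t)-b(t),
\een
and exploiting the mutual independence of $W(t)$ and $Z(t)$ via
\ben
P(S(t)-ES(t)\in x+\Delta)=E\!\left[P\!\left(W(t)\in x+Z(t)+\Delta\,\big|\,Z(t)\right)\right].
\een
First I would use (\ref{T3201}) to pick a constant $\gamma^{*}\in(\nu,\gamma)$ such that $b(t)\le\gamma^{*}\lambda(t)$ eventually; since $Y(t)\ge0$, this forces $Z(t)\ge-\gamma^{*}\lambda(t)$ and hence $x+Z(t)\ge(\gamma-\gamma^{*})\lambda(t)>0$ uniformly in $x\ge\gamma\lambda(t)$. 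This puts the inner conditional probability in the regime covered by Theorem~\ref{thm>thm31} with threshold $\gamma-\gamma^{*}$, yielding
\ben
P(W(t)\in x+Z(t)+\Delta\,|\,Z(t))\lesssim L_{F_\Delta}^{2}\,\lambda(t)\,F_\Delta(x+Z(t)+\mu).
\een

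Next I would split the outer expectation on the event $\{|Z(t)|\le\varepsilon\lambda(t)\}$ and its complement for small $\varepsilon>0$. On the small-fluctuation part I would argue that, since $|z|/(x+\mu)\le\varepsilon/\gamma+o(1)$ uniformly in $x\ge\gamma\lambda(t)$, the $\mathcal{OR}$-property of $F_\Delta$ together with the very definition of $L_{F_\Delta}$ gives
\ben
\lim_{\varepsilon\downarrow 0}\limsup_{t\to\infty}\sup_{x\ge\gamma\lambda(t)}\frac{\sup_{|z|\le\varepsilon\lambda(t)}F_\Delta(x+z+\mu)}{F_\Delta(x+\mu)}=L_{F_\Delta},
\een
which supplies the third factor of $L_{F_\Delta}$ and produces the upper estimate $L_{F_\Delta}^{3}\lambda(t)F(x+\mu+\Delta)$. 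The lower estimate is obtained symmetrically, replacing Theorem~\ref{thm>thm31}'s upper bound by its lower bound and replacing $L_{F_\Delta}$ by $l_{F_\Delta}$ in the sup-to-inf step; the $\mathcal{IR}$-conclusion (\ref{T3204}) then drops out because Corollary 2.2I of Cline~(1994) gives $l_{F_\Delta}=L_{F_\Delta}=1$.

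On the large-fluctuation part I would split further into $\{Z(t)>\varepsilon\lambda(t)\}$ and $\{-\gamma^{*}\lambda(t)<Z(t)<-\varepsilon\lambda(t)\}$. The first piece is easy: almost decreasingness of $F_\Delta$ gives $F_\Delta(x+Z(t)+\mu)\le C\,F_\Delta(x+\mu)$, and (\ref{T3202}) supplies a factor $P(Z(t)>\varepsilon\lambda(t))=o(1)$, so this piece is $o(\lambda(t)F_\Delta(x+\mu))$. The second piece is handled by combining almost decreasingness, which yields $F_\Delta(x+Z(t)+\mu)\le C\,F_\Delta((\gamma-\gamma^{*})\lambda(t)+\mu)$, with the polynomial $\mathcal{OR}$-comparison in Proposition~\ref{pro2.2}(ii) applied to $\beta<\beta(F_\Delta)$ in order to relate $F_\Delta((\gamma-\gamma^{*})\lambda(t)+\mu)$ to $F_\Delta(x+\mu)$, and again the probability factor $P(Z(t)<-\varepsilon\lambda(t))=o(1)$ from (\ref{T3202}).

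The delicate step, and where I expect the main obstacle, is precisely this second sub-case: making the bound uniform over the unbounded range $x\ge\gamma\lambda(t)$. The polynomial comparison of $F_\Delta$ values at scales $(\gamma-\gamma^{*})\lambda(t)$ and $x+\mu$ can blow up as $x/\lambda(t)\to\infty$, while (\ref{T3202}) only asserts qualitative convergence in probability with no prescribed rate; closing this gap requires carefully coupling the Matuszewska-index polynomial bound from Proposition~\ref{pro2.2}(ii) (noting that $|\beta(F_\Delta)|>1$ because $E(X^{+})^{r}<\infty$ with $r>1$ forces integrability of $F_\Delta$) with the moment control (\ref{T3101}) on $N(t)$ inherited through Theorem~\ref{thm>thm31}, so that the polynomial growth in $x/\lambda(t)$ is absorbed by the $o(1)$ factor uniformly in the stated range.
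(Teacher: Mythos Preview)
Your overall strategy---condition on $Y(t)$, shift the problem to $W(t)=S_{N(t)}-\mu\lambda(t)$, invoke Theorem~\ref{thm>thm31}, then split according to whether $Z(t)=Y(t)-b(t)$ is small or large---is exactly the paper's strategy (the paper splits on $|Y(t)-b(t)|\le\delta b(t)$ rather than $|Z(t)|\le\varepsilon\lambda(t)$, but this is cosmetic). The handling of the middle piece and of the case $Z(t)>\varepsilon\lambda(t)$ is also essentially the same.

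The gap is in the piece $\{Z(t)<-\varepsilon\lambda(t)\}$, and your proposed remedy does not close it. You bound $x+Z(t)+\mu\ge(\gamma-\gamma^{*})\lambda(t)+\mu$ and then try to compare $F_\Delta((\gamma-\gamma^{*})\lambda(t)+\mu)$ with $F_\Delta(x+\mu)$ via a Matuszewska polynomial bound, hoping to absorb the resulting power of $x/\lambda(t)$ into the factor $P(Z(t)<-\varepsilon\lambda(t))=o(1)$. But that probability does not depend on $x$: for each fixed $t$ the supremum over $x\ge\gamma\lambda(t)$ of $(x/\lambda(t))^{|\beta|}\cdot P(Z(t)<-\varepsilon\lambda(t))$ is infinite, so no amount of moment control on $N(t)$ rescues the estimate uniformly in $x$.

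The paper avoids this difficulty by a sharper, purely deterministic lower bound. Since $Y(t)\ge0$ one always has $Z(t)\ge -b(t)\ge -\gamma^{*}\lambda(t)$, and combining this with $x\ge\gamma\lambda(t)$ and $\mu\ge0$ gives
\[
x+Z(t)+\mu \;\ge\; x-\gamma^{*}\lambda(t)+\mu \;\ge\; \Bigl(1-\tfrac{\gamma^{*}}{\gamma}\Bigr)(x+\mu),
\]
so the argument of $F_\Delta$ stays within a \emph{fixed multiplicative} window of $x+\mu$, uniformly in $x$. Almost decreasingness then yields $F_\Delta(x+Z(t)+\mu)\le C\,F_\Delta\bigl((1-\gamma^{*}/\gamma)(x+\mu)\bigr)$, and a single application of $F_\Delta\in\mathcal{OR}$ (no polynomial blow-up) gives $\le C'\,F_\Delta(x+\mu)$. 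Multiplying by $P(Z(t)<-\varepsilon\lambda(t))=o(1)$ finishes the piece. Replace your weak lower bound by this multiplicative one and the argument goes through without the obstacle you flagged.
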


\section{Proofs of theorems and corollaries}
\setcounter{equation}{0}\setcounter{dei}{0}\setcounter{exa}{0}
\setcounter{thm}{0}\setcounter{cor}{0}\setcounter{lem}{0}\setcounter{pro}{0}\setcounter{rmk}{0}\mbox{}

Throughout this section, $C$ will represent a positive constant though its
value may change from one place to another. For $n\geq 1$, we denote by $S_n=\sum\limits_{k=1}^nX_k$ the $n$-th partial sum of a sequence
$\{X_k,k\geq 1\}$.

Before the proof of the main results, we first present several lemmas which will play important roles in the proofs of the theorems.
\begin{lem}\label{lem>lem401}
Let $\{ X, X_k,k \geq 1\} $ be a sequence of $i.i.d$ real-valued $r.v.s$ with a common distribution $F$. If $F(x + \Delta )$ is almost decreasing and $0 < \mu_+ = E(X^+ ) < \infty $, then for each fixed $v > 0$, there exist positive numbers $x_0$, $c_1$ and $c_2$ such that
\begin{equation}\label{l4101}
  P(S_n \in x + \Delta ) \leq c_1nF(vx + \Delta ) + c_2\left(\frac{n}{x}\right)^{\frac{1}{v}}
\end{equation}
holds for all $x \geq {x_0}$ and $n=1,2,\cdots.$
\end{lem}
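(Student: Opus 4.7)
The plan is a classical \emph{one big jump versus all small jumps} decomposition at the level $vx$. With $M_n=\max_{1\le k\le n}X_k$, one has
\[
P(S_n\in x+\Delta)\le P(S_n\in x+\Delta,\,M_n>vx)+P(S_n\in x+\Delta,\,M_n\le vx),
\]
and my goal is to bound the two terms on the right by $c_1 n F(vx+\Delta)$ and $c_2(n/x)^{1/v}$, respectively.

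For the large-jump term, the union bound and the identical-distribution assumption give
\[
P(S_n\in x+\Delta,\,M_n>vx)\le n\,P(X_1>vx,\,S_n\in x+\Delta).
\]
Setting $S'_{n-1}=\sum_{k=2}^n X_k$ and conditioning on it, the inner probability equals
\[
E\bigl[F\bigl(\bigl(\max(vx,\,x-S'_{n-1}),\,x-S'_{n-1}+T\bigr]\bigr)\bigr].
\]
I would split on whether $S'_{n-1}\le(1-v)x$ or not. In the first case the interval is exactly $x-S'_{n-1}+\Delta$ and sits at height $\ge vx$, so the almost-decreasing property of $F_\Delta$ yields $F(\cdot)\le C F(vx+\Delta)$ uniformly, once $x$ is large enough. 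In the second case the interval is a subset of $vx+\Delta$, so $F(\cdot)\le F(vx+\Delta)$ is immediate. Taking expectations and multiplying by $n$ produces the bound $c_1 n F(vx+\Delta)$.

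For the small-jump term, on $\{M_n\le vx\}$ we have $S_n=\widetilde S_n:=\sum_{k=1}^n X_k I(X_k\le vx)$, so $\{S_n\in x+\Delta\}\subseteq\{T_n\ge x\}$, where $T_n:=\sum_{k=1}^n Y_k$ and $Y_k=X_k^+ I(X_k\le vx)\in[0,vx]$. The $Y_k$'s are i.i.d.\ and bounded, with $EY_k\le\mu_+$. Applying Markov's inequality with exponent $p=1/v$ to $T_n$, together with a moment bound obtained via sub-additivity of $y\mapsto y^p$ when $p\le 1$ and via a Rosenthal / Marcinkiewicz--Zygmund estimate after recentering when $p>1$, should yield $P(T_n\ge x)\le c_2(n/x)^{1/v}$ for $x\ge x_0$ large enough to absorb the mean contribution $nEY_k$.

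\textbf{Main obstacle.} The delicate step is the large-jump case: one has to check that the random interval displayed above is always either a translate of $\Delta$ lying above $vx$ or a subset of $vx+\Delta$, so that the almost-decreasing hypothesis can be invoked uniformly in $(X_2,\ldots,X_n)$. The small-jump part is a truncated moment computation; its only subtlety is the case split at $p=1/v=1$ and the need to ensure $x_0$ is taken large enough that the mean term $nEY_k$ is dominated by $x$.
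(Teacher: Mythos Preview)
Your decomposition and the large-jump bound are essentially the paper's argument: the paper writes
\[
I_1(x,n)\le n\int P(X_n\in x-y+\Delta,\,X_n>vx)\,P(S_{n-1}\in dy)\le n\sup_{z\ge vx}F(z+\Delta)\le c_1 nF(vx+\Delta),
\]
and your case split on $S'_{n-1}\lessgtr(1-v)x$ merely makes explicit why the supremum bound holds. That part is fine.

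The genuine gap is the small-jump term when $v<1$, i.e.\ $p:=1/v>1$. A Marcinkiewicz--Zygmund or Rosenthal inequality on the centred truncated sum gives at best
\[
E|T_n-ET_n|^p\le C_p\, n\,E|Y_1-EY_1|^p,
\]
and since the lemma assumes only $E X^+<\infty$ you are forced to bound $EY_1^p\le (vx)^{p-1}\mu_+$ using the truncation. Feeding this into Markov yields $P(T_n\ge x)\le C\,n/x$, not $C(n/x)^p$; for $n/x<1$ and $p>1$ the target $(n/x)^p$ is strictly smaller, so the argument fails precisely in the regime the theorem actually uses (there one takes $v=1/p$ with $p>|\beta(F_\Delta)|>1$). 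Incidentally, for $p\le 1$ the ``sub-additivity'' route $(a+b)^p\le a^p+b^p$ also overshoots, giving $ET_n^p\le n\,EY_1^p$ and hence $n/x^p$; what does work in that range is Jensen, $ET_n^p\le (ET_n)^p\le (n\mu_+)^p$, but this says nothing about $p>1$.

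The paper avoids the case split entirely by an exponential Chebyshev bound: for any $h>0$,
\[
I_0(x,n)\le e^{-hx}\bigl(Ee^{h\widetilde X_1}\bigr)^n\le\exp\Bigl(-hx+\frac{n\mu_+\,(e^{hvx}-1)}{vx}\Bigr),
\]
the last step using that $y\mapsto (e^{hy}-1)/y$ is increasing on $(0,vx]$. Choosing $h=\frac{1}{vx}\log\bigl(\frac{x}{n\mu_+}+1\bigr)$ gives directly $I_0(x,n)\le (e\mu_+)^{1/v}(n/x)^{1/v}$ for every $v>0$ and all $x>0$, with no moment hypothesis beyond $\mu_+<\infty$.
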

\begin{proof}
We use arguments similar to those in the proof of Chen et al. (2011)~
 with some modifications.
For every fixed $v>0$, we denote ${\widetilde X_n} = {X_n}I({X_n} \leq vx)$ 
 and ${\widetilde S_n} = \sum\limits_{k = 1}^n {{{\widetilde X}_k}} $ for $n=1,2,\cdots$. Our starting point is the decomposition
\begin{align}
    P({S_n} \in x + \Delta ) = & P({S_n} \in x + \Delta ,\bigcap_{k=1}^n\{X_k \leq vx\}) + P({S_n} \in x + \Delta ,\bigcup_{k=1}^n\{X_k > vx\})\nonumber\\
    \hat{=} & {I_0}(x,n) + {I_1}(x,n). \label{pl4101}
\end{align}
First, we estimate ${I_0}(x,n)$: By Chebyshev's inequality, for a positive number $h=h(x,n)$ which
will be specified later, we have
\begin{align}
  {I_0}(x,n) = &P({\widetilde S_n} \in x + \Delta )\nonumber\\
    &\leq 
     {e^{ - hx}}{(E({e^{h{{\widetilde X}_1}}} - 1) + 1)^n}\nonumber\\
    &\leq {e^{ - hx}}\exp \left (n \int_0^{vx} {({e^{hy}} - 1} )F(dy)\right)\nonumber\\
    &\leq \exp \left (-hx+ \frac{n\mu_+(e^{hvx} - 1)}{{vx}}\right),  \label{pl4102}
\end{align}
where the last step is obtained by the monotonicity in $y \in (0, + \infty )$ of the function $\frac{{{e^{hy}} - 1}}{y}$.
Hence, by taking $h = \frac{1}{{vx}}\log (\frac{x}{{n{\mu_+ }}} + 1) > 0$ in (\ref{pl4102}), we obtain that
\begin{equation}\label{pl4103}
 {I_0}(x,n) \leq \exp \left( - \frac{1}{v}\log \left(\frac{x}{{n{\mu_+ }}} + 1\right) + \frac{1}{v}\right) \leq c_2\left(\frac{n}{x}\right)^{\frac{1}{v}},
\end{equation}
where  $c_2 = (\mu_+e)^{1/v}$. 
Next, we estimate ${I_1}(x,n)$: Since $F(x + \Delta )$ is almost decreasing, there exist positive numbers ${x_0}$ and ${c_1}$ such that
\begin{equation*}\label{eq2.11}
 \mathop {\sup }\limits_{z \geq vx} F(z + \Delta ) \leq {c_1} F(vx + \Delta )
\end{equation*}
holds for all $x \geq {x_0}$. Hence,
we have that
\begin{align}
 {I_1}(x,n) \leq & nP({S_n} \in x + \Delta ,{X_n} > vx)\nonumber \\
              = & n\int_{ - \infty }^{ + \infty } {P({X_n} \in x - y + \Delta ,{X_n} > vx)} P({S_{n - 1}} \in dy)\nonumber\\
               &\leq n\mathop {\sup }\limits_{z \geq vx} F(z + \Delta )\leq {c_1}nF(vx + \Delta ) \label{pl4104}
\end{align}
holds for all $x \geq {x_0}$. Substituting (\ref{pl4103}) and (\ref{pl4104}) into (\ref{pl4101}), we obtain (\ref{l4101}) immediately. This completes the proof of Lemma \ref{lem>lem401}.
 \end{proof}

The next lemma is a special case of Theorem 3.1 in Cheng and Li (2016):
\begin{lem}\label{lem>lem402}
Let $\{X, {X_k},k \geq 1\} $ be a sequence of $i.i.d$ real-valued $r.v.s$ with a common distribution $F$ of finite mean $\mu $. Suppose that $E{(X^+ )^r} < \infty $ for some $r>1$, $F_\Delta(x)=F(x + \Delta ) \in \mathcal{OR}$ and $F(x + \Delta )$ is almost decreasing, then for every fixed $\gamma  > 0$, it holds that
\begin{equation}\label{l4201}
 l_{F_{\Delta}} \leq \lim_{n\to\infty}\inf_{x\geq \gamma n}\frac{P(S_n-n\mu\in x+\Delta)}{nF(x+\mu+\Delta)} \leq \lim_{n\to\infty}\sup_{x\geq \gamma n}\frac{P(S_n-n\mu\in x+\Delta)}{nF(x+\mu+\Delta)}\leq  L_{F_{\Delta}} .
\end{equation}
\end{lem}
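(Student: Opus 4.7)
The plan is to establish the lemma via a one-big-jump decomposition tailored to the local event $\{S_n-n\mu\in x+\Delta\}$, carried out with truncation threshold $(1-\eta)x$ for $\eta\downarrow 0$, so that the oscillation constants $L_{F_\Delta}$ and $l_{F_\Delta}$ emerge from the contribution of a single large summand whose position lies in a relative window of width $\eta$ around $x$. First I would center by setting $Y_k=X_k-\mu$, so that $EY_k=0$ and $P(Y_k\in z+\Delta)=F(z+\mu+\Delta)=:F_Y(z+\Delta)$. The hypotheses (finite $r$-th moment of the positive part, $\mathcal{OR}$, almost-decreasingness, identical oscillation constants) transfer to $F_Y$, so it suffices to prove
\[
l_{F_\Delta}\leq\liminf_{n\to\infty}\inf_{x\geq\gamma n}\frac{P(S_n^Y\in x+\Delta)}{nF_Y(x+\Delta)}\leq\limsup_{n\to\infty}\sup_{x\geq\gamma n}\frac{P(S_n^Y\in x+\Delta)}{nF_Y(x+\Delta)}\leq L_{F_\Delta},
\]
where $S_n^Y=Y_1+\cdots+Y_n$.

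For the upper bound, fix small $\eta\in(0,1)$ and decompose $P(S_n^Y\in x+\Delta)=J_0+J_1+J_2$ according to whether $0$, $1$, or at least $2$ of the $Y_i$ exceed $(1-\eta)x$. The multi-jump term satisfies $J_2\leq\binom{n}{2}\overline F_Y((1-\eta)x)^2$, which is $o(nF_Y(x+\Delta))$ uniformly on $x\geq\gamma n$ because $E(Y^+)^r<\infty$ forces $n\overline F_Y(x)\to 0$ at this scale while Proposition \ref{pro2.2}(ii) provides a matching polynomial lower bound on $F_Y(x+\Delta)$. The no-big-jump term $J_0$ is handled by a truncated exponential-tilt estimate in the spirit of Lemma \ref{lem>lem401}, applied to $Y_kI(Y_k\leq (1-\eta)x)$ (whose mean is asymptotically negligible thanks to $EY_k=0$) and combined with a Fuk--Nagaev bound for positive deviations; this yields $J_0=o(nF_Y(x+\Delta))$ uniformly. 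The principal term is
\[
J_1=n\int P\bigl(Y_1\in x-z+\Delta,\,Y_1>(1-\eta)x\bigr)\,P(S_{n-1}^Y\in dz),
\]
which I would split on $|z|\leq\eta x$ versus $|z|>\eta x$. On $|z|\leq\eta x$ we have $x-z\in[(1-\eta)x,(1+\eta)x]$, so the inner probability is at most $\sup_{(1-\eta)x\leq y\leq(1+\eta)x}F_Y(y+\Delta)\leq(L_{F_\Delta}+o(1))F_Y(x+\Delta)$ by definition of $L_{F_\Delta}$. On $|z|>\eta x$, the weak law of large numbers (using $E|Y|<\infty$) gives $P(|S_{n-1}^Y|>\eta x)\leq P(|S_{n-1}^Y|/(n-1)>\eta\gamma/2)=o(1)$, while the integrand is dominated by $CF_Y(x+\Delta)$ via almost-decreasingness, so the contribution is $o(nF_Y(x+\Delta))$. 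Letting $\eta\downarrow 0$ yields $\limsup\leq L_{F_\Delta}$.

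For the lower bound, apply Bonferroni with the same threshold:
\[
P(S_n^Y\in x+\Delta)\geq nP(A_1)-\binom{n}{2}P(A_1\cap A_2),\quad A_i=\{Y_i>(1-\eta)x,\,S_n^Y\in x+\Delta\}.
\]
The intersection term is $o(nF_Y(x+\Delta))$ by the same reasoning as for $J_2$. For $P(A_1)$, condition on $S_{n-1}^Y$ and observe that on $\{|S_{n-1}^Y|\leq\eta x\}$ the event $\{Y_1\in x-S_{n-1}^Y+\Delta\}$ automatically entails both $S_n^Y\in x+\Delta$ and $Y_1>(1-\eta)x$; hence
\[
P(A_1)\geq\int_{|z|\leq\eta x}P(Y_1\in x-z+\Delta)\,P(S_{n-1}^Y\in dz)\geq(l_{F_\Delta}-o(1))F_Y(x+\Delta)\bigl(1-o(1)\bigr),
\]
using $\inf_{(1-\eta)x\leq y\leq(1+\eta)x}F_Y(y+\Delta)\geq(l_{F_\Delta}-o(1))F_Y(x+\Delta)$ by definition of $l_{F_\Delta}$ together with $P(|S_{n-1}^Y|\leq\eta x)\to 1$ by LLN. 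Letting $\eta\downarrow 0$ gives $\liminf\geq l_{F_\Delta}$.

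The main obstacle is that because $E(X^+)^r<\infty$ leaves the left tail of $Y$ unconstrained, the lower deviation $P(S_{n-1}^Y<-\eta x)$ can only be controlled qualitatively through the law of large numbers rather than through a quantitative moment bound. Pushing this qualitative control into a genuinely negligible contribution in both $J_1$ and $P(A_1)$, \emph{uniformly} in $x\geq\gamma n$ as $n\to\infty$, requires that the residual integrand be dominated by $F_Y(x+\Delta)$ up to a universal constant on the tail range of $z$, and this is exactly what the almost-decreasing hypothesis and Proposition \ref{pro2.2}(ii) furnish; without these two ingredients the oscillation constants $l_{F_\Delta},L_{F_\Delta}$ could not be recovered sharply.
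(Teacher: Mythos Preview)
The paper does not give a proof of this lemma at all: it simply records it as ``a special case of Theorem~3.1 in Cheng and Li (2016)'' and moves on. So there is no in-paper argument to compare your proposal against; what you have written is a self-contained attempt at the external result.

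Your one-big-jump scheme (center, split according to how many $Y_i$ exceed $(1-\eta)x$, extract $l_{F_\Delta}$ and $L_{F_\Delta}$ from the single-jump term on the window $|z|\le\eta x$, and let $\eta\downarrow0$) is the standard and correct strategy for local large deviations of this type. Two technical steps in your sketch need tightening, however. First, the crude bound $J_2\le\binom{n}{2}\overline F_Y((1-\eta)x)^2$ is generally \emph{not} $o(nF_Y(x+\Delta))$ under the stated hypotheses: for finite $T$ the ratio $\overline F_Y(x)/F_Y(x+\Delta)$ can diverge, so $n\overline F_Y(x)\to0$ does not compensate. The fix is to keep the local constraint when bounding $J_2$: on $\{Y_1,Y_2>(1-\eta)x,\ S_n^Y\in x+\Delta\}$ condition on $Y_2,\dots,Y_n$ and bound the $Y_1$-probability by $\sup_{z\ge(1-\eta)x}F_Y(z+\Delta)\le CF_Y(x+\Delta)$ (almost-decreasingness plus $\mathcal{OR}$), giving $J_2\le Cn^2\overline F_Y((1-\eta)x)F_Y(x+\Delta)$ and hence $J_2/(nF_Y(x+\Delta))\le Cn\overline F_Y((1-\eta)x)\to0$.

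Second, the $J_0$ estimate is the delicate point. With truncation at $(1-\eta)x$ and only $E(Y^+)^r<\infty$ (no second moment, no left-tail moment), the exponential-tilt bound in the style of Lemma~\ref{lem>lem401} yields at best $C(n/x)^{1/(1-\eta)}$ from the first-moment argument, which is merely bounded on $x\ge\gamma n$; and the Fuk--Nagaev version with $r$th moment gives roughly $C(n/x^r)^{1/(1-\eta)}$, which beats $nF_Y(x+\Delta)$ only when $(r-\eta)/(1-\eta)>|\beta(F_\Delta)|$ (via Lemma~\ref{lem>lem404}). Since $E(X^+)^r<\infty$ forces $|\beta(F_\Delta)|\ge r$, this may fail for small $\eta$. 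The usual remedy is a two-level truncation: take an auxiliary small level $v\in(0,1)$ with $1/v>|\beta(F_\Delta)|$, control the ``all $Y_i\le vx$'' part by $C(n/x)^{1/v}$ which is then genuinely negligible, and show separately that the contribution from ``some $Y_i\in(vx,(1-\eta)x]$ and $S_n^Y\in x+\Delta$'' is $o(nF_Y(x+\Delta))$ by combining almost-decreasingness with $P(S_{n-1}^Y>\eta x)\to0$. With these two adjustments your outline goes through and yields exactly the constants $l_{F_\Delta},L_{F_\Delta}$.
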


The following lemma can be found in Ng et al. (2003) and Chen et al. (2011):
\begin{lem}\label{lem>lem403}
Let $\{\xi_t, t\geq 0\}$ be a nonnegative stochastic process with $E\xi_t\to 1$ as $t\to\infty$.
Then, the following statements are equivalent:\\
\textup{(i)} $\xi_t\mathop\to\limits^p 1$ as $ t\to\infty$;\\
\textup{(ii)} $E\xi_tI(\xi_t >1+\varepsilon)\to 0$ as $t\to\infty$ for every fixed $\varepsilon > 0$; and\\
\textup{(iii)} $E\xi_tI(\xi_t\leq 1-\delta)\to 0$ as $t\to\infty$ for every fixed $\delta\in(0,1)$.
\end{lem}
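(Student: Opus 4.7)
I will prove the two equivalences (i)$\Leftrightarrow$(ii) and (i)$\Leftrightarrow$(iii) separately, which together give the full three-way equivalence.

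For the forward implications (i)$\Rightarrow$(ii) and (i)$\Rightarrow$(iii), the key step is to upgrade (i) to $L^1$-convergence of $\xi_t$ to $1$. Since $(1-\xi_t)^+\in[0,1]$ is uniformly bounded and converges to $0$ in probability by (i), the bounded convergence theorem for convergence in probability gives $E(1-\xi_t)^+\to 0$. Combining with the identity $E(\xi_t-1)^+=(E\xi_t-1)+E(1-\xi_t)^+$ and the hypothesis $E\xi_t\to 1$ yields $E(\xi_t-1)^+\to 0$, and hence $E|\xi_t-1|\to 0$. Then (ii) follows by integrating the pointwise bound $\xi_t I(\xi_t>1+\varepsilon)\leq (\xi_t-1)^+ + I(|\xi_t-1|>\varepsilon)$, and (iii) follows by integrating $\xi_t I(\xi_t\leq 1-\delta)\leq (1-\delta) I(\xi_t\leq 1-\delta)\leq I(|\xi_t-1|\geq\delta)$.

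For (ii)$\Rightarrow$(i), the upper tail is immediate from Markov's inequality: $P(\xi_t>1+\varepsilon)\leq (1+\varepsilon)^{-1} E\xi_t I(\xi_t>1+\varepsilon)\to 0$. For the lower tail, the three-piece split
\[
E\xi_t\leq (1-\delta)P(\xi_t\leq 1-\delta)+(1+\varepsilon)\bigl(1-P(\xi_t\leq 1-\delta)\bigr)+E\xi_t I(\xi_t>1+\varepsilon)
\]
together with (ii) and $E\xi_t\to 1$ rearranges to $(\varepsilon+\delta)\limsup_t P(\xi_t\leq 1-\delta)\leq \varepsilon$; letting $\varepsilon\downarrow 0$ gives $P(\xi_t\leq 1-\delta)\to 0$ for every $\delta\in(0,1)$.

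The direction (iii)$\Rightarrow$(i) is the main obstacle, because unlike in the previous step condition (iii) does not produce a Markov-type bound on the upper tail of $\xi_t$. My plan is to run the dual decomposition: (iii) combined with $E\xi_t\to 1$ gives $E\xi_t I(\xi_t>1-\delta)\to 1$; splitting this expectation over $\{1-\delta<\xi_t\leq 1+\varepsilon\}$ and $\{\xi_t>1+\varepsilon\}$, bounding the middle piece by $(1+\varepsilon)\bigl(1-P(\xi_t\leq 1-\delta)-P(\xi_t>1+\varepsilon)\bigr)$, and using the Markov lower bound $E\xi_t I(\xi_t>1+\varepsilon)\geq (1+\varepsilon)P(\xi_t>1+\varepsilon)$ will yield an inequality in $P(\xi_t>1+\varepsilon)$ that closes after sending $\delta\downarrow 0$ (for any fixed $\varepsilon>0$). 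The lower tail $P(\xi_t\leq 1-\delta)\to 0$ then follows from the Step 2 decomposition applied with any auxiliary $\varepsilon'>0$. The central challenge is executing this limit exchange so that the hypothesis $E\xi_t\to 1$ can be exploited to force the upper-tail mass down to zero.
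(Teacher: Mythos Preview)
The paper does not give its own proof of this lemma; it simply attributes the result to Ng et al.\ (2003) and Chen et al.\ (2011). So there is no in-paper argument to compare yours against.

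On the substance: your arguments for (i)$\Rightarrow$(ii), (i)$\Rightarrow$(iii), and (ii)$\Rightarrow$(i) are correct and cleanly executed. But your plan for (iii)$\Rightarrow$(i) does not close, and it \emph{cannot} close, because that implication is false as the lemma is stated. Take $\xi_t\equiv\xi$ for all $t$, where $P(\xi=0)=P(\xi=2)=\tfrac12$. Then $E\xi_t=1$ for every $t$; for any $\delta\in(0,1)$ the event $\{\xi_t\le 1-\delta\}$ is precisely $\{\xi_t=0\}$, on which $\xi_t=0$, so $E\xi_tI(\xi_t\le 1-\delta)=0$ and (iii) holds trivially; yet $P(|\xi_t-1|\ge 1)=1$, so (i) fails. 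The defect in your sketch is concrete: after bounding the middle piece from above by $(1+\varepsilon)\bigl(1-P(\xi_t\le 1-\delta)-P(\xi_t>1+\varepsilon)\bigr)$ you would need an \emph{upper} bound on $E\xi_tI(\xi_t>1+\varepsilon)$ to extract information from $E\xi_tI(\xi_t>1-\delta)\to 1$, whereas the Markov bound you invoke is a \emph{lower} bound; no rearrangement of these ingredients yields control on $P(\xi_t>1+\varepsilon)$, and the counterexample shows none can.

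What \emph{is} true, and what the paper actually needs when it invokes this lemma under hypothesis (\ref{T3102}), is the implication ``$P(\xi_t\le 1-\delta)\to 0$ for every $\delta\in(0,1)$'' $\Rightarrow$ (i), still under $E\xi_t\to 1$. This follows from your own style of three-piece bound: for any $\varepsilon>0$ and $\delta\in(0,1)$,
\[
E\xi_t\ \ge\ (1-\delta)\bigl(1-P(\xi_t\le1-\delta)-P(\xi_t>1+\varepsilon)\bigr)+(1+\varepsilon)\,P(\xi_t>1+\varepsilon),
\]
which rearranges to $(\varepsilon+\delta)P(\xi_t>1+\varepsilon)\le E\xi_t-(1-\delta)+(1-\delta)P(\xi_t\le1-\delta)$; take $\limsup_t$ and then $\delta\downarrow 0$. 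Thus the equivalence (i)$\Leftrightarrow$(ii), the one-way implication (i)$\Rightarrow$(iii), and the implication ``lower-tail probability $\to 0$'' $\Rightarrow$ (i) together cover every use of the lemma in the paper; only the stated direction (iii)$\Rightarrow$(i) is defective, and your proposal cannot repair it.
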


We shall need the following lemma in the sequel:
\begin{lem}\label{lem>lem404}
Suppose that $f \in \mathcal{OR} $ and $f$ is almost decreasing. Then 
\begin{equation*} \label{l4401}{x^{ - p}} = o(f(x))
\end{equation*} holds for all $p > |\beta (f)|$ as $x\to\infty$.
\end{lem}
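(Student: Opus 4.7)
The plan is to exploit Proposition \ref{pro2.2} to produce a polynomial minorant for $f$ on a tail, and then simply compare powers of $x$. First I would invoke part (i) of that proposition: since $f \in \mathcal{OR}$, the lower Matuszewska index $\beta(f)$ is a finite real number. The crucial observation is that the hypothesis $p > |\beta(f)|$ is precisely what is needed to guarantee $\beta(f) > -p$: if $\beta(f) \geq 0$, then $-p < 0 \leq \beta(f)$ holds automatically since $p > |\beta(f)| \geq 0$; if $\beta(f) < 0$, then $p > |\beta(f)| = -\beta(f)$ rearranges directly to $\beta(f) > -p$. Hence the open interval $(-p,\beta(f))$ is nonempty.

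Fix any real $\beta$ with $-p < \beta < \beta(f)$, and apply Proposition \ref{pro2.2}(ii) with this $\beta$. This gives positive constants $c_\beta$ and $x_\beta$ such that $f(xy)/f(x) \geq c_\beta y^\beta$ for every $y \geq 1$ and every $x \geq x_\beta$. Specializing to $x = x_\beta$ and writing $z = x_\beta y$, the inequality translates into the explicit polynomial lower bound
\[
f(z) \geq C z^{\beta} \qquad \text{for all } z \geq x_\beta,
\]
where $C := c_\beta f(x_\beta) x_\beta^{-\beta} > 0$.

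The conclusion then falls out: for $z \geq x_\beta$ one has
\[
\frac{z^{-p}}{f(z)} \leq C^{-1} z^{-p-\beta},
\]
and the right-hand side tends to zero as $z \to \infty$ because $-p - \beta < 0$. This is exactly the claim $x^{-p} = o(f(x))$.

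The only step that requires any thought is the elementary case analysis ensuring $(-p,\beta(f)) \neq \emptyset$, which is precisely the content of the hypothesis $p > |\beta(f)|$; I would not expect this to be a true obstacle. I also note that the ``almost decreasing'' assumption on $f$ does not appear to enter this argument, since only a one-sided polynomial minorant is needed; it is presumably stated here to keep the hypotheses uniform with the preceding lemmas of the section rather than being intrinsically required.
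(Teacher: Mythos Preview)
Your argument is correct. The paper itself omits the proof entirely, merely pointing to Lemma~3.5 of Tang and Tsitsiashvili (2003); your derivation via the Potter-type lower bound from Proposition~\ref{pro2.2}(ii) is exactly the standard route for such statements and is almost certainly what that reference does as well. Your side remark is also accurate: the ``almost decreasing'' hypothesis plays no role in this particular lemma, since the polynomial minorant $f(z)\geq Cz^{\beta}$ already follows from $f\in\mathcal{OR}$ alone.
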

\begin{proof}

The proof of Lemma \ref{lem>lem404} is similar to that of Lemma 3.5 in Tang and  Tsitsiashvili (2003), so it is omitted.
\end{proof}
Now, we are in a position to prove Theorem \ref{thm>thm31}.

{\bf Proof of Theorem \ref{thm>thm31}.}

For an arbitrarily fixed number $\delta \in(0, \min\{\frac{\gamma }{|\mu |},1\})$ where $\frac{1}{0}=\infty$ by convention,
we 
divide $P({S_{N(t)}} - \mu \lambda (t) \in x + \Delta )$ into three parts as
\begin{align}
&P({S_{N(t)}} - \mu \lambda (t) \in x + \Delta )\nonumber\\
 = &\left (\sum\limits_{n < (1 - \delta )\lambda (t)}  + \sum\limits_{|n - \lambda (t)| \leq \delta \lambda (t)}  + \sum\limits_{n > (1 + \delta )\lambda (t)} \right )P({S_n}{\rm{ - }}\mu \lambda (t) \in x + \Delta )P(N(t) = n) \nonumber \\
\hat  = & {J_1}(x,t) + {J_2}(x,t) + {J_3}(x,t).\label{PT3101}
\end{align}
  We will estimate $J_i(x,t),i=1,2,3$, respectively. First, we will prove that
 \be\label{PT3102}
  J_1(x,t)=o(\lambda(t)F(x+\mu+\Delta))
 \ee
holds uniformly for all $x\geq \gamma t$ as $t\to\infty$.

 We will consider two scenarios according to $\mu\geq 0$ and $\mu<0$, respectively.

{\it Scenario 1}: $\mu \geq 0$. By Lemma \ref{lem>lem402}, there exists a positive integer ${n_0}$ such that
\begin{equation}\label{PT3103}
P(S_n-n\mu\in y+\Delta)\leq (1 + \delta )nL_{F_\Delta} F(y + \mu  + \Delta )
\end{equation}
holds for all $y\geq \frac{\gamma+\delta\mu }{{1 - \delta }}n$ and $n\geq n_0$. For this fixed $n_0$ and sufficiently large $t$, we divide $J_1(x,t)$ into two parts as
\beq
 J_1(x,t)&=&\left(\sum_{n=1}^{n_0}+\sum_{n_0<n<(1 - \delta )\lambda (t)}
 \right)
 P(S_n \in   x +\mu \lambda (t)+ \Delta )P(N(t) = n)\nonumber\\
 &=&J_{11}(x,t)+J_{12}(x,t). \label{PT3104}
\eeq
Note that $x>\gamma\lambda(t)$ and $n<(1 - \delta )\lambda (t)$ imply that
    $x + \mu \lambda (t) - n\mu  \geq x+\delta\mu\lambda(t) \geq \frac{\gamma+\delta\mu }{{1 - \delta }}n.$
Hence, by taking $y=x+\mu\lambda(t)-n\mu$ in (\ref{PT3103}), it follows that
\beq
J_{12}(x,t)&\leq& \sum_{n_0<n<(1 - \delta )\lambda (t)}
{(1 + \delta )nL_{F_\Delta} F(x + \mu \lambda (t) - n\mu  + \mu  + \Delta )} P(N(t) = n)\nonumber\\
           &\leq& L_{F_\Delta} (1 +\delta )\mathop {\sup }\limits_{z \geq x + \mu } F(z + \Delta )EN(t)I(N(t) < (1 - \delta )\lambda (t)).\label{PT3105}
\eeq
Note that $E(X^+)^r<\infty$ for $r>1$ implies $\beta(F_{\Delta})<-1$, it follows from (\ref{T3101}) that
\begin{equation*}\label{PT3106}
 EN{(t)}I(N(t) > (1 + \varepsilon )\lambda (t)) = o(\lambda (t))
\end{equation*}
holds for all $\varepsilon\in(0,1)$ as $t\to\infty$, which yields from Lemma \ref{lem>lem403} that
\begin{equation}\label{PTB3101}
 EN(t)I(N(t)< (1 - \delta )\lambda (t)) = o(\lambda (t)).
\end{equation}
In addition, since $F(x+\Delta)$ is almost decreasing, we have that
\be\label{PT3107}
\sup \limits_{z \geq y } F(z + \Delta )\leq CF(y+\Delta)\ee
holds for sufficiently large $y$. This, along with
(\ref{PT3105}) and (\ref{PTB3101}) implies that
\be \label{PT3108}
\limsup_{t\to\infty}\sup\limits_{x \geq \gamma \lambda (t)} \frac{J_{12}(x,t)}{\lambda(t)F(x+\mu+\Delta)}=0.
\ee
On the other hand, taking $v = \frac{1}{p}$ in Lemma \ref{lem>lem401} with $p > |\beta ({F_\Delta })| \geq 1$, there exist positive numbers ${c_1}$ and ${c_2}$ such that
\beq
  &&{J_{11}}(x,t) \leq \sum\limits_{n = 1}^{{n_0}} {\left\{{c_1}nF\left(\frac{{x + \mu \lambda (t)}}{p} + \Delta \right) + {c_2}{{ \left(\frac{n}{{x + \mu \lambda (t)}}\right )}^p}\right \}} P(N(t) = n)\nonumber\\
 &\leq &\left\{{c_1}{n_0}F\left (\frac{{x + \mu \lambda (t)}}{p} + \Delta\right ) + {c_2} \left(\frac{n_0}{{x + \mu \lambda (t)}}\right )^p\right\}P(N(t)<(1-\delta)\lambda(t)).\label{PT3109}
  \eeq
Note that $F(x + \Delta ) \in \mathcal{OR}$ and (\ref{PT3107}) yield that
\be\label{PT3110}
F\left (\frac{{x + \mu \lambda (t)}}{p} + \Delta\right ) \leq \sup_{z>\frac{x+\mu}{p}}F(z + \Delta ) \leq CF(x + \mu  + \Delta )
\ee
holds for sufficiently large $x$ and $t$. Furthermore, by Lemma \ref{lem>lem404}, we have that
\be\label{PT3111}
(x + \mu \lambda (t))^{ - p} \leq (x+\mu)^{ - p} =  o(F(x + \mu  + \Delta )),
\ee
holds uniformly for all $x\geq \gamma\lambda(t)$ as $t\to\infty.$
Thus, from  (\ref{PTB3101}) and (\ref{PT3109})-(\ref{PT3111}), we have
\be \label{PT31110}
\limsup_{t\to\infty}\sup\limits_{x \geq \gamma \lambda (t)} \frac{J_{11}(x,t)}{\lambda(t)F(x+\mu+\Delta)}=0.
\ee
 Obviously,
from (\ref{PT3108}) and (\ref{PT31110}), it follows that (\ref{PT3102}) 
 holds uniformly for all $x\geq \gamma\lambda(t)$ as $t\to\infty.$

{\it Scenario 2}: $\mu <0 $. 
we take $\widetilde \gamma  = \max \{ |\mu | + 1,\gamma \} $ and split the $x-$region into two disjoint regions as $$[\gamma \lambda (t), + \infty ) = [\gamma \lambda (t),\widetilde \gamma \lambda (t)) \cup [\widetilde \gamma \lambda (t), + \infty ).$$
For the first $x$-region, 
 note that
\ben
 \sup_{\gamma \lambda (t) \leq x < \widetilde \gamma \lambda (t)}F(\lambda(t)+\Delta)\leq \sup_{z\geq \frac{x+\mu}{\widetilde{\gamma}}}F(z+\Delta)\leq CF\left (\frac{x+\mu}{\widetilde{\gamma}}+\Delta \right )
\een
holds for sufficient large $x$ and $t$.
It follows from $F(x + \Delta ) \in \mathcal{OR}$ and (\ref{T3102}) that 
\be\label{PT3112}
\limsup_{t\to\infty}\sup_{\gamma \lambda (t) \leq x < \widetilde \gamma \lambda (t)} \frac{J_1(x,t)}{\lambda (t)F(x+\mu+\Delta)}
\leq \limsup_{t\to\infty}\sup_{\gamma \lambda (t) \leq x < \widetilde \gamma \lambda (t)} \frac{P(N(t)<(1-\delta)\lambda(t))}{ \lambda (t)F(x + \mu  + \Delta )}=0.
\ee
For the second $x$-region $x \geq \widetilde \gamma \lambda (t)$, it follows from $x>\gamma\lambda(t)$ and $n<(1 - \delta )\lambda (t)$ that
\ben
  x + \mu \lambda (t) - n\mu  \geq x + \mu \lambda (t) \geq \frac{{\widetilde \gamma  + \mu }}{{1 - \delta }}n.
\een
Using a method similar to the previous scenario, we can easily obtain that (\ref{PT3102}) holds uniformly for all $x\geq \widetilde{\gamma}\lambda(t)$ as $t\to\infty.$
Combining with (\ref{PT3112}) we obtain that (\ref{PT3102}) holds uniformly also for all $x\geq \gamma\lambda(t)$ as $t\to\infty.$

Next, we will prove that
 \be\label{PT3113}
  \limsup_{t\to\infty}\sup\limits_{x \geq \gamma \lambda (t)} \frac{J_3(x,t)}{\lambda(t)F(x+\mu+\Delta)}=0.
 \ee
 We will consider two cases according to $\mu\geq 0$ and $\mu<0$ also.  

{\it Case 1}: $\mu \geq 0$. Taking $v = \frac{1}{p}<1$ in Lemma \ref{lem>lem401} again, 
 it follows that
\begin{align*}
{J_3}(x,t) \le& \sum\limits_{n > (1 + \delta )\lambda (t)}^{} {\left [{c_1}nF\left(\frac{{x + \mu \lambda (t)}}{p} + \Delta \right) + {c_2}{{\left(\frac{n}{{x + \mu \lambda (t)}}\right )}^p}\right]} P(N(t) = n)\\
 =& {c_1}F\left(\frac{{x + \mu \lambda (t)}}{p} + \Delta\right )EN(t)I(N(t) > (1 + \delta )\lambda (t))\\
 &+ {c_2}{(x + \mu \lambda (t))^{ - p}}EN^p(t)I(N(t) > (1 + \delta )\lambda (t)).
\end{align*}
Combining with (\ref{T3101}), (\ref{PT3110}) and (\ref{PT3111}), we obtain that (\ref{PT3113}) holds.

{\it Case 2}: $\mu <0$. Denote
 \ben
 \gamma '=\left\{\begin{array}{cc}
                       |\mu|,&\textup{~if~} \gamma  + \mu  \geq 0\\
                       |\mu | - \frac{|\gamma  + \mu |}{1 + \delta },&\textup{~if~} \gamma  + \mu  < 0
                 \end{array}
           \right ..
\een
Since $x>\gamma\lambda(t)$ and $n>(1+ \delta )\lambda (t)$ imply that
   $ x + \mu \lambda (t) - n\mu  \geq \gamma' n,$
 by Lemma \ref{lem>lem402}, we have that (\ref{PT3103}) holds for all
 $y>\gamma' n$ when $n$ is sufficiently large, 
  which yields that
\begin{align*}
J_3(x,t) &\leq \sum\limits_{n > (1 + \delta )\lambda (t)}^{} {(1 + \delta )}n L_{F_\Delta} F(x + \mu \lambda (t) - n\mu  + \mu  + \Delta )P(N(t) = n)\\
 &\leq (1 + \delta )L_{F_\Delta}\mathop {\sup }\limits_{z \geq x} F(z + \mu  + \Delta )EN(t)I(N(t) > (1 + \delta )\lambda (t)).
\end{align*}
Hence, combining with Lemma \ref{lem>lem403}, (\ref{PT3113}) follows from (\ref{PT3107}) and (\ref{T3102}).

Finally, we estimate $J_2(x,t)$: Recall that $\delta \in(0, \min\{\frac{\gamma }{|\mu |},1\})$.  It follows from $x>\gamma\lambda(t)$ and $|n - \lambda (t)| \leq \delta \lambda (t)$ that
 \ben
   x  + \mu \lambda (t) - n\mu  \geq x - |\mu |\delta \lambda (t) \geq \frac{{\gamma  - |\mu |\delta }}{{1 + \delta }}n.
 \een
 For sufficiently large $t$, it follows from Lemma \ref{lem>lem402} that
\begin{align*}
&(1-\delta)nl_{F_\Delta}  F(x + \mu \lambda (t) - n\mu  + \mu  + \Delta )\\
 &\leq P({S_n} - n\mu  \in x + \mu \lambda (t) - n\mu  + \Delta ) \\
 &\leq (1+\delta)nL_{F_\Delta}  F(x + \mu \lambda (t) - n\mu  + \mu  + \Delta )
\end{align*}
and
\ben
 |\mu \lambda (t) - n\mu | \leq \frac{{|\mu |\delta }}{\gamma }x \leq \frac{{2|\mu |\delta }}{\gamma }(x + \mu )
\een
hold for all $x>\gamma\lambda(t)$ and $|n - \lambda (t)| \leq \delta \lambda (t)$.
Hence, for sufficiently large $t$, it holds that
\beq
&&  (1-\delta)^2l_{F_\Delta}\lambda(t)\inf\limits_{1-\frac{2|\mu |\delta }{\gamma }<y<1+\frac{2|\mu |\delta }{\gamma }}F((x+\mu)y+\Delta)P(|N(t) - \lambda (t)| \leq \delta \lambda (t))\nonumber\\
&\leq & (1-\delta)l_{F_\Delta} \sum\limits_{|n - \lambda (t)| \leq \delta \lambda (t)}n F(x + \mu \lambda (t) - n\mu  + \mu  + \Delta )P(N(t)=n)\nonumber\\
&\leq & \inf\limits_{x \geq \gamma \lambda (t)} J_2(x,t)\leq \sup\limits_{x \geq \gamma \lambda (t)} J_2(x,t)\nonumber\\
&\leq & (1+\delta)L_{F_\Delta} \sum\limits_{|n - \lambda (t)| \leq \delta \lambda (t)}n F(x + \mu \lambda (t) - n\mu  + \mu  + \Delta )P(N(t)=n)\nonumber\\
&\leq &  (1+\delta)^2L_{F_\Delta}\lambda(t)\sup\limits_{1-\frac{2|\mu |\delta }{\gamma }<y<1+\frac{2|\mu |\delta }{\gamma }}F((x+\mu)y+\Delta)P(|N(t) - \lambda (t)| \leq \delta \lambda (t)).\nonumber\\\label{PT3114}
\eeq
 Using Lemma \ref{lem>lem403}, either (\ref{T3101}) or (\ref{T3102}) implies that $\frac{{N(t)}}{{\lambda (t)}}\mathop  \to \limits^p 1 \text{ as } t \to \infty$, which yields that
\begin{equation}\label{PT3115}
\lim_{t\to\infty}P(|N(t) - \lambda (t)| \leq \delta \lambda (t))=1.
\end{equation}
Hence, it follows from (\ref{PT3114}) and (\ref{PT3115}) that
\beqn
l_{F_\Delta}^2
\leq  \lim_{\delta\downarrow0}\liminf_{t\to\infty}\inf\limits_{x \geq \gamma \lambda (t)} \frac{ J_2(x,t)}{\lambda(t)F(x+\mu+\Delta)}
\leq  \lim_{\delta\downarrow0}\limsup_{t\to\infty}\sup\limits_{x \geq \gamma \lambda (t)} \frac{ J_2(x,t)}{\lambda(t)F(x+\mu+\Delta)}\leq L_{F_\Delta}^2.
\eeqn
 Combining with (\ref{PT3101}), (\ref{PT3102}) and (\ref{PT3113}), we obtain that (\ref{T3103}) holds uniformly for all $x \geq \gamma \lambda (t)$ as $t \to \infty $ immediately.  This completes the proof of the first part of Theorem \ref{thm>thm31}. The second part of Theorem \ref{thm>thm31} immediately follows from the first part since $F(x+\Delta)\in \cal{IR}$ implies that $l_{F_{\Delta}}=L_{F_{\Delta}}=1.$

{\sl  Proof of Corollary \ref{cor>cor31}.}
Let ${\hat X_k} = {X_k} + c$ for $k = 1,2,3\cdots$ and ${\hat S_{N(t)}} = \sum\limits_{k = 1}^{N(t)} {{{\hat X}_k}}  = \sum\limits_{k = 1}^{N(t)} {({X_k} + c)}$. 
 Let $\hat F$ be the common distribution of $\{ {\hat X_k},k \geq 1\} $. It is easy to prove that $F(x+\Delta)\in \mathcal{L}\cap\mathcal{OR}$ implies that $\hat F(x + \Delta )\in \mathcal{OR}$ and $L_{F_\Delta }   = L_{{\hat F}_\Delta } $ and $l_{F_\Delta }   = l_{{\hat F}_\Delta } $.
Hence, by Theorem \ref{thm>thm31}, for any $\gamma>0$, 
 the relation
\begin{equation*}\label{eq3.8}
  \lambda (t)l_{F_\Delta}^2\hat F(\hat x + \mu  + c + \Delta ) \lesssim P({\hat S_{N(t)}} - (\mu  + c)\lambda (t) \in \hat x + \Delta ) \lesssim \lambda (t)L_{F_\Delta}^2\hat F(\hat x + \mu  + c + \Delta )
\end{equation*}
holds uniformly for all $\hat x \geq \hat \gamma \lambda (t)$ as $t \to \infty $, where $\hat{\gamma}=\gamma-c > 0$ and $\hat{x}=x-c\lambda(t)>\hat{\gamma}\lambda(t)$.
 Hence, we obtain that relation (\ref{C3101}) 
holds uniformly for all $x \geq \gamma \lambda (t)$ as $t \to \infty $.
In particular, when $F(x + \Delta ) \in \mathcal{IR}$, (\ref{C3102}) holds by Proposition \ref{pro2.3}.

Now we stand on the position to prove Theorem \ref{thm>thm32}.

{\it Proof of Theorem \ref{thm>thm32}.}

For an arbitrarily fixed $0 < \delta  < 1$, we divide $P(S(t) - ES(t) \in x + \Delta )$ into three parts as
\begin{align}
&P(S(t) - ES(t) \in x + \Delta )\nonumber\\
= & \left (\int_0^{ (1 - \delta ) b (t)}  + \int_{(1- \delta)  b (t)}^{(1+ \delta)  b (t)}  + \int_{(1 + \delta ) b (t)}^{\infty} \right )P\left (\sum\limits_{i = 1}^{N(t)} {X_i}  - \mu \lambda (t) \in x + y + b (t) + \Delta \right )dP(Y(t) \leq y)\nonumber \\
\widehat  =& {J_1}(x,t) + {J_2}(x,t) + {J_3}(x,t).\label{PT3201}
\end{align}
We will estimate $J_i(x,t),~i=1,2,3$, respectively. First, we estimate ${J_1}(x,t)$:
  By (\ref{T3201}), there exists a number $w\in (\nu, \gamma)$ such that
\be\label{PT3202}
b(t)\leq w\lambda(t)
\ee
holds for sufficiently large $t$. Since $x>\gamma \lambda(t)$ and $0\leq y\leq(1 - \delta ) b (t)$ imply that
$x + y -  b (t) \geq x -  b (t) \geq (\gamma  - w)\lambda (t)
$ 
 and
$x + y -  b (t)+\mu \geq x-  w\lambda(t)+\mu \geq (1-\frac{  w}{\gamma})(x+\mu)$
hold for sufficiently large $t$ and $x>\gamma t$, it follows from Theorem \ref{thm>thm31} that 
\begin{align*}
{J_1}(x,t) &\leq \int_0^{ (1 - \delta ) b (t)}(1 + \delta )\lambda (t)L_{F_\Delta}^2 F(x + y -  b (t) + \mu  + \Delta )dP(Y(t)\leq y)\nonumber\\
 &\leq (1 + \delta )\lambda (t)L_{F_\Delta}^2 \mathop {\sup }\limits_{z \geq 1 - \frac{{w}}{\gamma }} F((x + \mu)z  + \Delta )P(Y(t) \leq (1 - \delta ) b (t))\nonumber
\end{align*}
holds for sufficiently large $t$ and $x>\gamma t$.
Furthermore, since $F(x + \Delta )$ is almost decreasing, it follows from (\ref{T3202}) and $F(x + \Delta ) \in \mathcal{OR}$ that (\ref{PT3102}) holds uniformly for all $x\geq \gamma\lambda(t)$ as $t\to\infty$.

Next, we estimate ${J_3}(x,t)$:
 It should be noted that $x>\gamma \lambda(t)$ and $y>(1 + \delta ) b (t)$ imply that
$x + y -  b (t) \geq x \geq \gamma \lambda (t)$
and
$x + y -  b (t)+\mu \geq x +\mu$  
hold for sufficiently large $t$. Hence, we obtain from Theorem \ref{thm>thm31} that
\begin{align*}
{J_3}(x,t) &\leq \int _0^{ (1 + \delta ) b (t)} {(1 + \delta )\lambda (t)L_{F_\Delta}^2 F(x + y -  b (t) + \mu  + \Delta )} dP(Y(t)\leq y)\nonumber\\
 &\leq {(1 + \delta )\lambda (t)L_{F_\Delta}^2\sup_{z>x+\mu} F(z  + \Delta )}P(Y(t)>(1 + \delta ) b (t))\nonumber
\end{align*}
hold for sufficiently large $t$ and $x>\gamma t$. On the other hand,
since $F(x + \Delta )$ 
 is almost decreasing, it follows (\ref{T3202}) that (\ref{PT3113}) holds.

Finally, we estimate ${J_2}(x,t)$:  Note that $x>\gamma \lambda(t)$ and $(1 - \delta ) b (t)<y<(1 + \delta ) b (t)$ imply that
 $x + y -  b (t) \geq x - \delta b (t) \geq (\gamma  - \delta w)\lambda (t)$
 and
$x + y -  b (t)+\mu\in[ (1-\frac{\delta w}{\gamma})(x+\mu),~(1+\frac{\delta w}{\gamma})(x+\mu)]$
 hold for sufficiently large $t$ and $x>\gamma t$. Hence, we obtain from Theorem \ref{thm>thm31} that
\begin{align}
 &(1 - \delta )\lambda (t)l_{F_\Delta}^2 \mathop {\inf }\limits_{(1 - \frac{{\delta w}}{\gamma } ) \leq z \leq (1 + \frac{{\delta w}}{\gamma }  )} F((x + \mu)z  + \Delta )P(|Y(t) -  b (t)| \leq \delta  b (t))\nonumber\\
 & \leq \int_{(1-\delta)b(t)}^{(1+ \delta)b(t)}{(1 - \delta )\lambda (t)l_{F_\Delta}^2F(x + y-  b (t) + \mu  + \Delta )}d P(Y(t) \leq y)\nonumber\\
&\leq {J_2}(x,t)\nonumber\\
 & \leq \int_{(1- \delta)  b (t)}^{(1+ \delta)  b (t)} {(1 + \delta )\lambda (t)L_{F_\Delta}^2F(x + y-  b (t) + \mu  + \Delta )}d P(Y(t) \leq y)\nonumber\\
 &\leq (1 + \delta )\lambda (t)L_{F_\Delta}^2 \mathop {\sup }\limits_{(1 - \frac{{\delta w}}{\gamma } ) \leq z \leq (1 + \frac{{\delta w}}{\gamma }  )} F((x + \mu)z  + \Delta )P(|Y(t) -  b (t)| \leq \delta  b (t))\label{PT3203}
\end{align}
holds for sufficiently large $t$ and $x>\gamma t$, which yields from (\ref{T3202}), (\ref{PT3102}), (\ref{PT3113}), (\ref{PT3201}) and (\ref{PT3203}) that
\beqn
 &&(1 - \delta )l_{F_\Delta}^2 \liminf_{x\to\infty}\mathop {\inf }\limits_{(1 - \frac{{\delta w}}{\gamma } ) \leq z \leq (1 + \frac{{\delta w}}{\gamma }  )} \frac{F((x + \mu)z  + \Delta )}{F(x+\mu+\Delta)}\nonumber\\
&\leq& \liminf_{t\to\infty}\inf_{x>\gamma t}\frac{P(S(t) - ES(t) \in x + \Delta )}{\lambda(t)F(x+\mu+\Delta)}\leq \limsup_{t\to\infty}\sup_{x>\gamma t}\frac{P(S(t) - ES(t) \in x + \Delta )}{\lambda(t)F(x+\mu+\Delta)}\nonumber\\
 & \leq &(1 +\delta )L_{F_\Delta}^2 \limsup_{x\to\infty}\mathop {\sup }\limits_{(1 - \frac{{\delta w}}{\gamma } ) \leq z \leq (1 + \frac{{\delta w}}{\gamma }  )} \frac{F((x + \mu)z  + \Delta )}{F(x+\mu+\Delta)}
\eeqn
holds for sufficiently large $t$. 
By the arbitrariness of $\delta$ and the definitions of $L_{F_\Delta}$ and $l_{F_\Delta}$, we obtain that (\ref{T3203}) holds uniformly for all $x\geq \gamma\lambda(t)$ as $t\to\infty$.  This completes the proof of Theorem \ref{thm>thm32}.



\end{document}